\newcommand{\ov}{\overline}
\newcommand{\ges}{{\scriptscriptstyle\geqslant}}
\newcommand{\col}{\colon}
\newcommand{\fm}{{\mathfrak m}}
\newcommand{\fa}{{\mathfrak a}}
\newcommand{\fb}{{\mathfrak b}}
\newcommand{\fn}{{\mathfrak n}}
\newcommand{\bd}{\boldsymbol}
\newcommand{\edim}{\operatorname{edim}}
\newcommand{\reg}{\operatorname{reg}}
\newcommand{\codim}{\operatorname{codim}}
\newcommand{\depth}{\operatorname{depth}}
\newcommand{\lin}{\operatorname{lin}}
\newcommand{\ld}{\operatorname{ld}}
\newcommand{\rank}{\operatorname{rank}}
\newcommand{\HH}{\operatorname{H}}
\newcommand{\Tor}{\operatorname{Tor}}
\newcommand{\Ext}{\operatorname{Ext}}
\newcommand{\Hom}{\operatorname{Hom}}
\newcommand{\Po}{\operatorname{P}}
\newcommand{\hilb}[2]{\operatorname{Hilb}_{#1}(#2)}
\newcommand{\agr}[2][{}]{{{#2}^{\mathsf g}_{#1}}}
\newcommand{\ds}[1]{\displaystyle{#1}}
\theoremstyle{plain}
\newtheorem{theorem}{Theorem}[section]
\newtheorem{proposition}[theorem]{Proposition}
\newtheorem{lemma}[theorem]{Lemma}
\newtheorem{corollary}[theorem]{Corollary}
\theoremstyle{definition}
\newtheorem{chunk}[theorem]{}
\theoremstyle{remark}
\newtheorem{question}[theorem]{Question}
\newtheorem*{Question1}{Question 1}
\newtheorem*{Question2}{Question 2}
\newtheorem*{Question3}{Question 3}
\newenvironment{bfchunk}{\begin{chunk}\textit}{\end{chunk}}
\newtheorem*{Claim1}{Claim 1}
\newtheorem*{Claim2}{Claim 2}
\newtheorem{remark}[theorem]{Remark}
\numberwithin{equation}{theorem}
\begin{document}
\title[Linearity defect]{On the linearity defect of the residue field}

\begin{abstract}
Given a  commutative Noetherian local ring $R$, the linearity defect of a finitely generated $R$-module $M$, denoted $\ld_R(M)$, is an invariant that measures how far $M$ and its syzygies are from having a linear resolution. Motivated by a positive known answer in the graded case, we study the question of whether $\ld_R(k)<\infty$  implies $\ld_R(k)=0$.   We give answers in special cases, and we discuss several interpretations and refinements of the question. 
\end{abstract}

\author[L.~M.~\c{S}ega]{Liana M.~\c{S}ega}
\address{Liana M.~\c{S}ega\\ Department of Mathematics and Statistics\\
   University of Missouri\\ \linebreak Kansas City\\ MO 64110\\ U.S.A.}
     \email{segal@umkc.edu}

\subjclass[2000]{Primary 13D07. Secondary 13D02}
\keywords{linearity defect, Koszul local rings}
\thanks{Research partly supported by NSF grant DMS-1101131 and Simons Foundation grant 20903}

\maketitle

\section*{Introduction}
This paper is concerned with properties of the minimal free resolution of the residue field $k$ of a commutative Noetherian local ring $R$, and more precisely with the observation that certain behavior of the tail of the resolution determines properties of its beginning. An instance of such behavior is displayed by  Castelnuovo-Mumford regularity:  If $A$ is a standard graded algebra over a field $k$  and  $\reg_A(k)<\infty$, then $A$ is Koszul, cf.\! Avramov and Peeva \cite{AP}. 

We study an invariant related to regularity, defined for all local rings $R$: the  {\it linearity defect} of a finitely generated  $R$-module $M$, denoted $\ld_R(M)$. This notion was introduced by  Herzog and Iyengar \cite{HI} and studied further by Iyengar and R\"omer \cite{IR}.  The definition is recalled in Section 1. We denote $\agr{(-)}$ the associated graded objects with respect to the  maximal ideal $\fm$ of $R$, and we recall here that $\ld_R(M)<\infty$ if and only if there exists a syzygy $N$ of $M$ such that $\agr N$ has a linear resolution over $\agr R$, and $\ld_R(k)=0$ if and only if the algebra $\agr R$ is Koszul. 

Linearity defect can also be defined for standard graded $k$-algebras. If $A$ is a standard graded $k$-algebra, then  $\ld_A(k)<\infty$ implies $\reg_A(k)=0$, cf.\! \cite[1.12, 1.13]{HI}, hence $A$ is Koszul. A natural question, raised in \cite{HI}, is whether this property can be extended to local rings $(R,\fm,k)$:

\begin{Question1}
If $\ld_R(k)<\infty$, does it follow that $\ld_R(k)=0$? 
\end{Question1}

We give an interpretation of the linearity defect in terms of the  maps
$$
\nu_i^n(M)\colon \Tor_i^R(M, R/\fm^{n+1})\to \Tor_i^R(M,R/\fm^{n})\,
$$
induced by the canonical surjection $R/\fm^{n+1}\to R/\fm^n$. More precisely, we show in Theorem \ref{mu} that  $\ld_R(M)<\infty$ if and only if $\nu^n_{\gg 0}(M)=0$ for all $n>0$, and $\ld_R(M)=0$ iff $\nu^n_{> 0}(M)=0$ for all $n>0$. (The notation $\nu_{\gg 0}^n(M)=0$ means $\nu^n_i(M)=0$ for all $i\gg 0$ and the notation  $\nu^n_{> 0}(M)=0$ means  $\nu_i^n(M)=0$ for all $i>0$.) Based on this interpretation, Question 1 can be refined as follows: 

\begin{Question2}
Let $n>0$. If $\nu_{\gg 0}^n(k)=0$, does it follow that $\nu_{>0}^n(k)=0$? 
\end{Question2}

The maps $\nu^1(k)$  are connected to the Yoneda algebra $E=\Ext_R(k,k)$. Let $R^{!}$ denote the subalgebra of $E$ generated by $\Ext^1_R(k,k)$. A result of Roos shows that $\nu_{> 0}^1(k)=0$ if and only if  $E=R^{!}$ and $\nu_{\gg 0}^1(k)=0$ if and only if $E$ is finitely generated as a module over $R^{!}$.  When $n=1$, Question 2 can be reformulated thus as follows: 

\begin{Question3}
If $E$ is finitely generated as a module over $R^{!}$, does it follow that $E=R^{!}$? 
\end{Question3}

As mentioned above, Question 1 has a a positive answer for standard graded $k$-algebras. In this case, we show that Question 3 has a  positive answer, as well; see Corollary \ref{gradedExt}. 

We further provide answers to these questions for a local ring $(R,\fm,k)$, as indicated below: 

\begin{enumerate}
\item Question 1 has a positive answer under any of the following assumptions: 
\begin{enumerate}[\quad \rm(a)]
\item $\fm^3=0$ (Section 1);
\item $R$ is complete intersection and $\agr R$ is Cohen-Macaulay (Section 6);
\item $R$ is Golod and $\agr R$ is Cohen-Macaulay (Section 7).
\end{enumerate}  

\item Question 2 has a positive answer under any of the following assumptions:
\begin{enumerate}[\quad\rm(a)]
\item $R$ is complete intersection and $n=1$ (Section 5);
\item $\fm^{n+2}=0$.  (Section 7).
\end{enumerate}

\item Question 3 has a positive answer under any of the following assumptions:
\begin{enumerate}[\quad\rm(a)]
\item $R$ is complete intersection (Section 5);
\item $E$ is generated over $R^!$ in degree $2$ (Section 5).
\end{enumerate}
\end{enumerate}

The paper is organized as follows. 

 In Section 1 we mainly introduce terminology.  In Section 2 we prove the interpretation of linearity defect $\ld_R(M)$ in terms of the maps $\nu^n_i(M)$. 

In Section 3 we note that $\ld_R(k)=0$ implies $\ld_R(\fm^n)=0$ for all $n$  and we provide a (possible) generalization:  We show that  $\ld_R(\fm^n)\le\ld_R(k)$ for all $n>0$. We ask whether equality holds; note that a positive answer for $n=1$ would give a positive answer to Question 1. 

In Section 4 we use a minimal Tate resolution of $k$ to prove that $\nu_{\gg 0}^1(k)=0$ implies $\nu_2^1(k)=0$.  The latter condition is shown in Section 5 to be equivalent to the following {\it quadratic}  property : If $R\cong \widehat Q/\fa$ is a minimal Cohen presentation with $(Q,\fn,k)$ regular local and $\fa\subseteq \fn^2$, then $\fn^3\cap \fa\subseteq \fn \fa$. 

Section 5 makes the connections with the Yoneda algebra. Section 6 is concerned with complete intersection rings, and Section 7 with Artinian and Golod rings. 

\section{Linearity defect}
In this section we establish notation, provide the definition of linearity defect, and we introduce Question 1.  Proposition \ref{m^3} provides an easily available answer in the radical cube zero case. 

Throughout, $(R,\fm,k)$ denotes a commutative Noetherian local ring; the notation identifies $\fm$ as the maximal ideal and $k$ as the residue field. In addition, we  assume that $\fm\ne 0$.  The {\it embedding dimension} of $R$, denoted $\edim R$, is the minimal number of generators of $\fm$, and the {\it codimension of $R$}, denoted $\codim R$, is defined as the number $\edim R-\dim R$.

If $M$ is a finitely generated $R$-module, then  the $i$th {\it Betti number} of $M$ is the number
\[
\beta_i^R(M)=\rank_k\Tor_i^R(M,k)\,.
  \] 
The \emph{Poincar\'e series} of $M$ over $R$ is the formal
power series
\[
  \label{Poincare}
  \Po_M^R(t)=\sum_{i\ge 0}\beta_i^R(M)t^i \in\mathbb Z[\![t]\!]\,.
\]
The {\it Hilbert series}  of $M$ is the formal power series 
$$
\hilb Mt=\sum_{n\ge 0}\rank_k(\fm^nM/\fm^{n+1}M)t^n\in\mathbb Z[\![t]\!]\,.
$$

We let $\ds{\agr R}$ denote the associated graded ring and  $\ds{\agr M}$ denote the associated graded module with respect to $\fm$; that is,
\[\ds{\agr R=\bigoplus_{n\geq 0}\ \fm ^n /\fm^{n+1}}
\quad\text{and}\quad
\ds{\agr M=\bigoplus_{n\geq 0}\ \fm ^n M/\fm^{n+1} M}.\]

\begin{bfchunk}{Linearity defect.}
\label{ld}
Let $M$ be a finitely generated $R$-module and let 
$$
F=\qquad  \cdots\to F_{i+1}\xrightarrow{\partial_i} F_i\to\cdots\to F_0\to 0
$$
be a minimal free resolution of $M$, with differential $\partial$.

For any such $F$ one constructs the complex 
\[
\operatorname{lin}^{R}(F)=\ \cdots\to\agr{F_{i+1}}(-i-1)\to \agr{F_{i}}(-i)\to \cdots\to \agr{F_{0}}\to 0\]
with differentials induced from $F$. Herzog and Iyengar \cite[1.7]{HI} defined the {\it linearity defect} of $M$ to be the number:
$$
\ld_R(M)=\sup\{i\in\mathbb Z\mid \HH_i(\lin^R(F))\ne 0\}\,.
$$
We make the convention that $\ld_R(0)=0$. 

If $\ld_R(M)=0$, we say, following \cite{HI}, that $M$ is a {\it Koszul module.} If $k$ is Koszul, we say that $R$ is a {\it Koszul ring}.  The connection with the classical Koszul algebra notion, defined for standard graded rings,  is as follows:  $R$ is a Koszul ring iff $\agr R$ is a Koszul algebra. Also, $M$ is Koszul iff $\agr M$ has a linear resolution over $\agr R$. 

For an integer $d$, one has $\ld_R(M)\le d$ if and only if the $d$th syzygy module in a minimal free resolution of $M$ over $R$ is Koszul. 

If $M$ is a Koszul $R$-module, then $\lin^R(F)$ is a minimal free resolution of $\agr M$ over $\agr R$. Since this resolution is minimal and  Hilbert series are additive on short exact sequences, we have:  
\begin{equation}
\label{KoszulP}
\Po_M^R(t)=\Po_{\agr M}^{\agr R}(t)=\frac{\hilb M{-t}}{\hilb R{-t}}\,.
\end{equation}
\end{bfchunk}

\begin{remark}
\label{graded1}
Let $A$ be a standard graded algebra over a field $k$, and let $\fm$ denote its maximal irrelevant ideal. When $N$ is a finitely generated graded $A$-module, one can define in the same manner the linearity defect $\ld_A(N)$ of $N$,  by using a minimal graded free resolution of $N$ over $A$. All subsequent definitions and results can be similarly adapted to the graded case. 
\end{remark}

As mentioned in the introduction, we are concerned with the following question: 

\begin{Question1}
\label{question}
If $\ld_R(k)<\infty$ does it follow that $R$ is Koszul?
\end{Question1}

This  question was posed in \cite[1.14]{HI},  motivated by the fact that a positive answer is available in the case of standard graded algebras; see \cite[1.13]{HI} for a proof of this fact,  and  \cite[2.4]{HI} for a further discussion of the problem. 

A positive answer in the case $\fm^3=0$ is easily available.

\begin{proposition}
\label{m^3}
Assume  $\fm^3=0$. If there exists a finitely generated $R$-module $M$ such that  $\ld_R(M)<\infty$, then $R$ is Koszul. 
\end{proposition}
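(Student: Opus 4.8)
The plan is to pass to the associated graded algebra $A:=\agr R$ and exploit the Hilbert--Poincar\'e identity \eqref{KoszulP} together with the arithmetic forced by $\fm^3=0$. Set $d=\ld_R(M)$; then the $d$th syzygy $N=\syz{d}{M}$ is Koszul, so $\agr N$ admits a linear free resolution over $A$. First I would dispose of degenerate cases. Since $\fm^3=0$ the ring $R$ is Artinian, so $\depth R=0$, and by the Auslander--Buchsbaum formula a finitely generated module has finite projective dimension exactly when it is free; the substantive case is therefore $\pd_R M=\infty$, and then every syzygy of $M$, in particular $N$, is nonzero with $\pd_R N=\infty$. If $\fm^2=0$ then $A$ is Koszul and there is nothing to prove, so I also assume $\fm^2\neq 0$ and record $\hilb R t=1+et+st^2$ with $e=\edim R\ge 1$ and $s=\dim_k\fm^2\ge 1$.

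Next I would extract the numerical consequence. Because $N$ is Koszul, \eqref{KoszulP} gives
\[
\Po_N^R(t)=\Po_{\agr N}^{A}(t)=\frac{\hilb N{-t}}{1-et+st^2}\,,
\]
and since $\fm^3=0$ the series $\hilb N t$ is a polynomial of degree at most $2$. Writing $b_i:=\beta_i^R(N)=\beta_i^A(\agr N)$, clearing denominators shows that $b_i=e\,b_{i-1}-s\,b_{i-2}$ for all $i\ge 3$. As $\pd_R N=\infty$ we have $b_i>0$ for every $i$. Were the roots of $x^2-ex+s$ non-real, the general solution of this recursion would be $b_i=C\rho^{\,i}\cos(i\theta-\varphi)$ with $\theta\in(0,\pi)$ and $C\neq 0$, which is negative for infinitely many $i$ --- impossible. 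Hence $e^2\ge 4s$, equivalently $1/\hilb R{-t}=1/(1-et+st^2)$ has non-negative coefficients.

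The remaining and hardest step is to conclude that $\agr R$ is a Koszul algebra. Non-negativity of $1/\hilb R{-t}$ is necessary but not sufficient: the non-quadratic algebra $k[x,y,z]/(x^2,xy,xz,yz,y^3,z^3)$ has $\fm^3=0$, $e=3$, $s=2$ and satisfies $e^2\ge 4s$, yet it is not Koszul. What must be used is that $\agr N$ is a genuine module supporting a linear resolution, not merely a numerical shadow. A natural first reduction is to show $\agr R$ is quadratic: the linear differentials $d_1,d_2$ in the resolution of $\agr N$ produce quadratic relations through $d_1d_2=0$, and the obstruction to quadraticity lives in the degree-$3$ component of $\Tor_2^{A}(k,k)$. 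I would then invoke the structure theory of short ($\fm^3=0$) local rings, which pins down $\Po_k^R(t)$ and, I expect, shows that the $e^2\ge 4s$ non-Koszul rings cannot support a Koszul module of infinite projective dimension. I anticipate that this last passage --- from \emph{some} module being Koszul to $k$ being Koszul --- is the crux, with Golod rings such as the example above forming the delicate case.
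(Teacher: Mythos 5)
Your argument is incomplete exactly at the step that carries all the weight. The preliminary reductions (passing to a Koszul syzygy $N$, the recursion $b_i=e\,b_{i-1}-s\,b_{i-2}$, and the conclusion $e^2\ge 4s$) are correct, but, as you acknowledge yourself, this numerical condition is only necessary, and you leave the passage from ``$R$ admits a nonzero Koszul module killed by $\fm^2$'' to ``$\agr R$ is Koszul'' as something you ``expect'' to follow from the structure theory of short rings. That passage is not a routine verification: it is precisely \cite[Theorem 1.6]{AIS}, and invoking it is the entire content of the paper's proof. The paper argues in three lines: a syzygy $N$ with $\ld_R(N)=0$ lies in $\fm F$ for a free module $F$, so $\fm^3=0$ gives $\fm^2N=0$; then $\agr N$ has a linear resolution over $\agr R$, and \cite[Theorem 1.6]{AIS} yields that $\agr R$ is Koszul. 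Ruling out exactly the rings you worry about (non-Koszul short rings, e.g.\ Golod ones, satisfying $e^2\ge 4s$, such as your $k[x,y,z]/(x^2,xy,xz,yz,y^3,z^3)$) is what that theorem accomplishes, and its proof requires substantially more than Hilbert/Poincar\'e series arithmetic. So your proposal, as written, has a gap whose size is essentially the whole proposition; what you did prove (rationality/positivity constraints on $\Po^R_N(t)$) is a strictly weaker statement.

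A secondary point: the ``degenerate case'' you set aside is not actually harmless. If $M$ is free and nonzero, then $\ld_R(M)=0<\infty$ over \emph{any} short ring, Koszul or not (e.g.\ $R=k[x]/(x^3)$, $M=R$), so in that case the conclusion cannot be derived; the statement must be read with the implicit understanding that the Koszul syzygy one lands on is nonzero. The condition $\fm^2N=0$, automatic for syzygies, is what excludes nonzero free modules when $\fm^2\neq 0$, and a nonzero module annihilated by $\fm^2$ is also what \cite[Theorem 1.6]{AIS} requires. This is why the paper's proof works in the intended non-free situation, whereas your write-up both waves the free case away and leaves the main case open.
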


\begin{proof}
Since $\ld_R(M)<\infty$, there exists a syzygy $N$ in a minimal free resolution of $M$ such that $\ld_R(N)=0$. Since $\fm^3=0$, one has that $\fm^2N=0$. The $\agr R$-module $\agr N$ then has a linear resolution, hence $\agr R$ is Koszul, according to Avramov {\it et.\,al.} \cite[Theorem 1.6]{AIS}
\end{proof}

\section{The maps $\nu^n$}

In this section we introduce the maps $\nu^n$ and begin their study. The main result is Theorem \ref{mu}, which provides an interpretation of linearity defect in terms of these maps, and motivates Question 2.   A connection with Castelnouvo-Mumford regularity is made in Proposition \ref{gradedmu1}, based on the proof of results of Herzog and Iyengar \cite{HI}.

\begin{bfchunk}
Let $M$ be a finitely generated $R$-module and $F$ a minimal free resolution of $M$. For all  integers  $n$ and $i$ we consider the  map 
\begin{equation}
\label{mudef}
\nu^n_i(M)\colon \Tor^R_i(M, R/\fm^{n+1})\to \Tor^R_i(M,R/\fm^{n})
\end{equation}
 induced in homology by the canonical surjection
$R/\fm^{n+1}\to R/ \fm^{n}$. 

When $M=k$, we drop the module argument: We set  $\nu^n=\nu^n(k)$. 
\end{bfchunk}

\begin{theorem}
\label{mu}
Let $M$ be a finitely generated  $R$-module. The following then hold: 
\begin{enumerate}[\quad\rm(a)]

\item  If $i>0$ and $F$ is a minimal free resolution of $M$, then  $\HH_i(\operatorname{lin}^{R}(F))=0$ if and only if  $\nu^n_{i+1}(M)=0=\nu^n_{i}(M)$ for all $n>0$. 
\item $\ld_R(M)\le d$ if and only if $\nu^n_i(M)=0$ for all $i> d$ and all $n\ge 0$. 
\item $\ld_R(M)=0$ if and only if $\nu^n_i(M)=0$ for all $i>0$ and all $n\ge 0$. 
\end{enumerate}
\end{theorem}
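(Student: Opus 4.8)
The plan is to rephrase both sides of each equivalence as containments of submodules inside one fixed minimal free resolution $(F,\partial)$ of $M$, and then to pass between them by peeling off one power of $\fm$ at a time. Write $\partial\colon F_j\to F_{j-1}$ and, for $j,m\ge 0$,
\[
Z_j(m)=\{x\in F_j : \partial(x)\in\fm^m F_{j-1}\}.
\]
Computing $\Tor^R_j(M,R/\fm^m)$ from $F\otimes_R R/\fm^m$ gives $\Tor^R_j(M,R/\fm^m)=Z_j(m)/(\partial(F_{j+1})+\fm^m F_j)$, and since $\nu^n_j(M)$ is induced by the reduction $F/\fm^{n+1}F\to F/\fm^n F$---which on cycles is just the inclusion $Z_j(n+1)\subseteq Z_j(n)$---I get the first dictionary,
\[
\nu^n_j(M)=0\quad\Longleftrightarrow\quad Z_j(n+1)\subseteq \partial(F_{j+1})+\fm^n F_j.
\]
For the second dictionary I would compute the linear part directly: minimality gives $\partial(\fm^p F_j)\subseteq\fm^{p+1}F_{j-1}$, so the internal degree $p+i$ part of $\HH_i(\lin^R(F))$ is
\[
\HH_i(\lin^R(F))_{p+i}=\frac{\fm^p F_i\cap Z_i(p+2)+\fm^{p+1}F_i}{\partial(\fm^{p-1}F_{i+1})+\fm^{p+1}F_i},
\]
so that $\HH_i(\lin^R(F))=0$ if and only if, for every $p\ge 0$,
\[
(\star_p)\qquad \fm^p F_i\cap Z_i(p+2)\subseteq \partial(\fm^{p-1}F_{i+1})+\fm^{p+1}F_i.
\]

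With these in hand, part (a) becomes a comparison between the conditions $(\star_p)$ and the two containments defining $\nu^n_i=0$ and $\nu^n_{i+1}=0$. For the implication $(\nu^n_i=0=\nu^n_{i+1}$ for all $n>0)\Rightarrow\HH_i(\lin^R(F))=0$ I would check each $(\star_p)$ by a two-step peeling: given $x\in\fm^p F_i\cap Z_i(p+2)$, first use $\nu^{p+1}_i=0$ to write $x=\partial(y)+z$ with $z\in\fm^{p+1}F_i$ and $y\in Z_{i+1}(p)$, then use $\nu^{p-1}_{i+1}=0$ to replace $y$ by an element of $\fm^{p-1}F_{i+1}$ modulo $\partial(F_{i+2})$, landing $x$ in $\partial(\fm^{p-1}F_{i+1})+\fm^{p+1}F_i$ (the second step is vacuous when $p\le 1$). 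The reverse containment $\HH_i(\lin^R(F))=0\Rightarrow\nu^n_i=0$ is a finite descent: from $x\in Z_i(n+1)$, successive applications of $(\star_0),(\star_1),\dots,(\star_{n-1})$ push the remainder into ever deeper powers $\fm^q F_i$ while collecting a boundary term, reaching $\fm^n F_i$ after exactly $n$ steps, so $x\in\partial(F_{i+1})+\fm^n F_i$.

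The delicate point, which I expect to be the main obstacle, is the last implication $\HH_i(\lin^R(F))=0\Rightarrow\nu^n_{i+1}=0$. Here the analogous descent raises the order of $\partial(y)$ rather than that of $y$: given $y\in Z_{i+1}(n+1)$, applying $(\star_{n+1})$ to the cycle $\partial(y)$ yields $y=u+y_1$ with $u\in\fm^n F_{i+1}$ and $y_1\in Z_{i+1}(n+2)$, and iterating places $y$ in $\partial(F_{i+2})+\fm^n F_{i+1}$ only ``in the limit''---the naive induction does not terminate, and a bare $\fm$-adic convergence argument would force one to assume $R$ complete. I would instead close the gap with the Artin--Rees lemma applied to the submodule $\Ima(\partial\colon F_{i+1}\to F_i)=\Ker(\partial\colon F_i\to F_{i-1})$ of $F_i$ (this identification is exactly where the hypothesis $i>0$ enters): it supplies a constant $c$ for which $Z_{i+1}(m)\subseteq\partial(F_{i+2})+\fm^n F_{i+1}$ holds outright as soon as $m\ge n+c$. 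Running $c$ steps of the $(\star_p)$-descent to move from $Z_{i+1}(n+1)$ up to $Z_{i+1}(n+1+c)$, all the while peeling off corrections in $\fm^n F_{i+1}$, then delivers $\nu^n_{i+1}=0$.

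Finally, parts (b) and (c) follow formally from (a). Because $R/\fm^0=0$, the map $\nu^0_j(M)$ is automatically zero, so ``for all $n>0$'' and ``for all $n\ge 0$'' agree. If $\HH_i(\lin^R(F))=0$ for all $i>d$, then (a) gives $\nu^n_i=0$ for all such $i$; conversely, if $\nu^n_i=0$ for all $i>d$, then for each fixed $i>d$ both $\nu^n_i=0$ and $\nu^n_{i+1}=0$, whence (a) returns $\HH_i(\lin^R(F))=0$. Since $\ld_R(M)\le d$ means precisely that $\HH_i(\lin^R(F))=0$ for all $i>d$, this is (b), and (c) is the special case $d=0$.
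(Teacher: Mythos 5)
Your proposal is correct, and it diverges from the paper's proof in exactly one place, but that place is the crux. The dictionaries you set up are the same as the paper's (its conditions (2)--(3) in \ref{interpret} and the degree-by-degree description of $\HH_i(\lin^R(F))$ in \ref{notation}), and three of the four implications --- the two-step peeling that deduces $\HH_i(\lin^R(F))=0$ from $\nu^n_i(M)=0=\nu^n_{i+1}(M)$, the finite descent giving $\nu^n_i(M)=0$, and the formal derivation of (b) and (c) from (a) --- match the paper's argument essentially step for step. The genuine difference is the implication you single out as delicate, $\HH_i(\lin^R(F))=0\Rightarrow\nu^n_{i+1}(M)=0$. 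There the paper does precisely what you predicted a convergence argument would force: it first reduces to the case where $R$ is complete (the faithfully flat base change of Remark \ref{flat}, under which both vanishing conditions are insensitive), then runs the non-terminating descent and sums the infinite series of corrections $u=u_0+u_1+\cdots$, which converges by completeness. Your alternative stays over the original ring: Artin--Rees for $\Ima(\partial\colon F_{i+1}\to F_i)\subseteq F_i$ supplies a constant $c$ with $\Ima(\partial)\cap\fm^mF_i\subseteq\fm^{m-c}\Ima(\partial)$, hence $Z_{i+1}(m)\subseteq\partial(F_{i+2})+\fm^{m-c}F_{i+1}$, so that $c$ steps of the descent suffice. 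This is a legitimate trade: the paper's route isolates the analytic content in one base-change lemma, at the cost of verifying that minimality, the maps $\nu^n_i$, and the vanishing of $\HH_i(\lin^R(-))$ all descend along $R\to\widehat R$; yours exploits the Noetherian hypothesis directly, needs no passage to $\widehat R$, and yields an effective bound on how far the descent must run. One small correction to a parenthetical: your Artin--Rees step also uses exactness of $F$ in homological degree $i+1$ (to conclude $y-u\in\partial(F_{i+2})$ from $\partial(y-u)=0$), not only the identification $\Ima(\partial_{i+1})=\Ker(\partial_i)$ that you flag as the use of $i>0$; since $i+1\ge 1$ this always holds, so nothing breaks, but it should be said.
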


Obviously, parts (b) and (c) are immediate consequences of part (a). The proof of (a) will be given after some preliminaries. 

If $s$ is an integer such that $\nu^n_i=0$ for all $i>s$, we write $\nu^n_{>s}=0$. The notation $\nu^n_{\gg 0}=0$ means that an integer $s$ as above exists.  Note that the  statement of Theorem \ref{mu} motivates  Question 2 from the introduction.

We proceed now with some preparation for the proof of Theorem \ref{mu}(a). 

\begin{chunk}
\label{interpret}
Let $i>0$ and $n\ge 0$. Computing homology in \eqref{mudef} by means of a minimal free resolution $F$ of $M$ and using the natural isomorphism  $F\otimes_RR/\fm^k\cong F/\fm^kF$ for $k=n,n+1$, the map $\nu^n_i(M)$ can be realized as the map 
$$H_i(F/\fm^{n+1}F)\to H_i(F/\fm^nF)$$
induced in homology by the canonical surjection of complexes $F/\fm^{n+1}F\to F/\fm^nF$. 

The following statements are thus equivalent:
\begin{enumerate}
\item $\nu_i^n(M)=0$;
\item For every $x\in F_{i}$ with $\partial x\in \fm^{n+1}F_{i-1}$ there exists $y\in F_{i+1}$ such that $x-\partial y\in \fm^nF_i$;
\item For every $x\in F_{i}$ with $\partial x\in \fm^{n+1}F_{i-1}$ there exists $u\in \fm^nF_i$ such that $\partial x=\partial u$.
\end{enumerate}
When $n=1$, these conditions are also equivalent with 
\begin{enumerate}
\item[(2\rq{})] If $x\in F_{i}$ satisfies  $\partial x\in \fm^2F_{i-1}$, then $x\in \fm F_i$.
\end{enumerate}
For the proof of the implication (2)$\implies$(3) take $u=x-\partial y$. For the  proof of the implication (3)$\implies$(2), take $y$ such that $\partial y=x-u$, using the fact that $H_i(F)=0$. 
\end{chunk}

\begin{remark}
\label{mu_1}
Note that $\nu_1^n=0$ for all $n\ge 0$. Indeed, let $g_1, \dots, g_e$ be a minimal generating set for $\fm$ and consider a minimal free resolution $F$ of $k$ with $F_0=R$ and $F_1=R^e$. Let $x\in F_1$ with $\partial x\in \fm^{n+1}$. Then there exist $u_1, \dots, u_e\in\fm^n$ such that $\partial x=u_1g_1+\dots+u_eg_e$. Let $f_1, \dots, f_e\in F_1$ such that $\partial f_i=g_i$ for all $i$. If $u=u_1f_1+\dots+u_ef_e$, then we have $\partial x=\partial u$  and $u\in \fm^nF_1$, hence condition (3) of \ref{interpret} is satisfied. 
\end{remark}

\begin{remark}
\label{flat}
Let $\varphi\colon (R,\fm,k)\to (R',\fm',k)$ be a flat homomorphism of local rings such that $\varphi(\fm)=\fm'$; in particular, $\varphi$ is faithfully flat.  Let $M$ be a finitely generated $R$-module and set $M'=M\otimes_RR'$. When considering the maps $\nu^n(M')$, we regard $M'$ as an $R'$-module. Standard arguments show  that for all integers $n$, $i$ we have  $\nu^n_i(M)=0$ iff $\nu^n_i(M')=0$. Also, if $F$ is a minimal free resolution of $M$ over $R$, then $F'=F\otimes_RR'$ is a minimal free resolution of $F'$ over $R'$ and $\HH_i(\lin_R(F))=0$ if and only if $\HH_i(\lin_{R'}(F'))=0$. In particular, $\ld_R(M)=\ld_{R'}(M')$. 
\end{remark}

\begin{bfchunk}{Notation.}
\label{notation} Set $U=\operatorname{lin}^{R}(F)$. Note that $U$ is a complex of graded $\agr R$-modules, and we will denote by $\HH_i(U)_{j}$ the $j$th graded component of the $i$th homology module $\HH_i(U)$. For all integers $j$ and $s$, one has that $\HH_j(U)_{j+s}$ is the homology of the complex: 
\begin{equation}
\label{U}
\fm^{s-1}F_{j+1}/\fm^sF_{j+1}\to\fm^sF_j/\fm^{s+1}F_j\to\fm^{s+1}F_{j-1}/\fm^{s+2}F_{j-1}
\end{equation}
with differentials induced by the differential $\partial$ of the complex $F$. We will use $\partial$ to denote the differential of this complex, as well.  If $x\in\fm^{s} F_j$ we  denote $\ov x$ the image of $x$ in $\fm^sF_j/\fm^{s+1}F_j$. Note that $\partial x\in \fm^{s+1}F_{j-1}$ and, with the notational convention  above, one has $\partial({\ov x})=\ov{\partial x}$. 
\end{bfchunk}

\begin{proof}[Proof of Theorem {\rm \ref{mu}(a)}]

In Remark \ref{flat}, we may take  $R'$ to be the $\fm$-adic completion of $R$. Consequently, we may assume that $R$ is complete. 
Let $U$ be as in \ref{notation}. 

Let $i>0$. Assume that $\HH_i(U)=0$, hence $\HH_i(U)_{i+s}=0$ for all $s$. Let $n\ge 0$. We want to prove that  $\nu^n_{i+1}(M)=0=\nu^n_{i}(M)$. To do so, we will verify condition (3) of \ref{interpret}, with the appropriate indices.

To show $\nu_{i+1}^n(M)=0$, let $x\in F_{i+1}$ with $\partial x\in \fm^{n+1}F_i$. Note that  $\ov{\partial x}$ is a cycle in $U_{i,i+n+1}$. Since $H_i(U)_{i+n+1}=0$, we have that $\ov{\partial x}=\ov{\partial u_0}$ for some $u_0\in \fm^nF_{i+1}$. Then $\partial(x-u_0)\in \fm^{n+2}F_i$. Using inductively this reasoning, we obtain that for each $k\ge 0$ there exists $u_k\in \fm^{n+k}F_i$ such that $\partial x-\partial u_0-\partial u_1-\cdots -\partial u_k\in\fm^{n+2+k}F_i$. Since $R$ is assumed complete, we can set $u=u_0+u_1+\cdots$. Then $u\in \fm^nF_{i+1}$ and $\partial x-\partial u=0$. This shows that  $\nu_{i+1}^n(M)=0$, using \ref{interpret}.

We now show $\nu^n_{i}(M)=0$. We will prove this by induction on $n\ge 0$. Obviously, one has $\nu^0_{i}(M)=0$. 
Assume now that $n>0$ and $\nu^k_{i}(M)=0$ for $k<n$. Let $x\in F_{i}$ with $\partial x\in \fm^{n+1}F_{i-1}$.  In particular, $\partial x\in \fm^nF_{i-1}$. Since we assumed $\nu^{n-1}_i(M)=0$, condition (3) of \ref{interpret} gives that  there exists  $u'\in \fm^{n-1}F_i$ such $\partial x=\partial u'$.
 Considering $\ov{u'}$ as an element in $U_{i,i+n-1}=\fm^{n-1} F_i/\fm^n F_i$, we have $\partial{\ov { u'}}=0$. Since $\HH_i(U)_{i+n-1}=0$, there exists $\ov z\in \fm^{n-2}F_{i+1}/\fm^{n-1}F_{i+1}$ such that  $\ov{u'}=\ov{\partial z}$ in $ \fm^{n-1}F_{i}/\fm^{n}F_{i}$,  hence $u'-\partial z\in \fm^n F_i$. We take then $u=u'-\partial z$ and note that $u\in \fm^{n}F_i$ and $\partial x=\partial u$.  This shows $\nu^n_i(M)=0$, using \ref{interpret}.  
\smallskip

Assume now that $\nu^n_{i+1}(M)=\nu^n_{i}(M)=0$ for all $n\ge 0$; we use again \ref{interpret} to translate below these conditions.  Let $s\ge 0$. We will show that  $\HH_i(U)_{i+s}=0$.  Let $\ov x\in \fm^sF_i/\fm^{s+1}F_i$ with $\partial\ov x=0$ in $\fm^{s+1}F_{i-1}/\fm^{s+2}F_{i-1}$. Thus $\partial x\in\fm^{s+2}F_{i-1}$. Since $\nu^{s+1}_{i}(M)=0$, there exists $y\in \fm^{s+1}F_{i}$ and $u\in F_{i+1}$ such that  $x-y=\partial u$.  Then  $\partial u\in \fm^sF_i$ and  the fact that  $\nu^{s-1}_{i+1}(M)=0$ shows that $\partial u=\partial w$ with $w\in \fm^{s-1}F_{i+1}$. Thus $x=y+\partial w$, with $y\in \fm^{s+1}F_i$ and  $w\in \fm^{s-1}F_{i+1}$, hence $\ov x=\partial\ov w$ in $U$, showing that $\HH_i(U)_{i+s}=0$.
\end{proof}

In the graded case, the maps $\nu^1$ are related to the  Castelnuovo-Mumford regularity, denoted $\reg(-)$. Analyzing the proof \cite[1.12, 1.13]{HI}, we identify below a weaker hypothesis, in terms of the maps $\nu^1$.  We repeat the main steps of the proofs given  in {\it  loc.\,cit.}, with the purpose of making sure that the weaker hypothesis is indeed sufficient.

\begin{proposition}
\label{gradedmu1}
Let $A$ be a standard graded $k$-algebra, and $N$ a graded finitely generated $A$-module. If $\nu^1_{\gg 0}(N)=0$, then $\reg_A(N)<\infty$. 

In particular, if $\nu^1_{\gg 0}(k)=0$, then $A$ is Koszul. 
\end{proposition}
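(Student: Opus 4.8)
The plan is to reduce the ``in particular'' clause to the main assertion. Once $\reg_A(k)<\infty$ is established, the Avramov--Peeva theorem \cite{AP} quoted in the introduction yields that $A$ is Koszul, so the entire content lies in proving $\nu^1_{\gg 0}(N)=0\implies\reg_A(N)<\infty$. I would work throughout with a minimal graded free resolution $F$ of $N$ and record, for each $i$, the top generator degree $t_i=\sup\{j\mid \Tor^A_i(N,k)_j\ne 0\}$ of $F_i$; since $N$ is finitely generated over the Noetherian ring $A$ each $t_i$ is finite, and $\reg_A(N)=\sup_i(t_i-i)$. If $\pd_A N<\infty$ there is nothing to prove, so I assume the resolution is infinite.

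The heart of the argument is to translate the vanishing $\nu^1_i(N)=0$ into control on how fast $t_i$ grows, repeating the main steps of \cite[1.12, 1.13]{HI}. By the graded analogue (Remark \ref{graded1}) of the equivalent condition $(2')$ in \ref{interpret}, $\nu^1_i(N)=0$ says precisely that any homogeneous $x\in F_i$ with $\partial x\in\fm^2F_{i-1}$ already lies in $\fm F_i$; in particular, every homogeneous minimal generator $x$ of $F_i$ satisfies $\partial x\notin \fm^2F_{i-1}$. I would apply this to a minimal generator $x$ in the top degree $t_i$. Then $\partial x$ is homogeneous of degree $t_i$ and its image in $\fm F_{i-1}/\fm^2F_{i-1}$ is nonzero. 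A degree count in the graded free module $F_{i-1}$ shows that the degree-$t_i$ part of $\fm F_{i-1}/\fm^2F_{i-1}$ can be nonzero only if $F_{i-1}$ has a minimal generator in degree $t_i-1$; hence $t_{i-1}\ge t_i-1$, that is,
\[
t_i-i\ \le\ t_{i-1}-(i-1).
\]

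Assembling this, let $s$ be such that $\nu^1_i(N)=0$ for all $i\ge s$. The displayed inequality then shows that the sequence $(t_i-i)_{i\ge s-1}$ is nonincreasing, so it is bounded above by $t_{s-1}-(s-1)$; the finitely many remaining terms $t_i-i$ with $i<s-1$ are finite as well. Therefore $\reg_A(N)=\sup_i(t_i-i)<\infty$, as desired, and the Koszul conclusion for $N=k$ follows from \cite{AP}.

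I expect the only delicate point to be the degree bookkeeping in the key step: one must verify that the nonzero class of $\partial x$ in $\fm F_{i-1}/\fm^2F_{i-1}$, being homogeneous of degree $t_i$, forces a generator of $F_{i-1}$ in degree exactly $t_i-1$. Writing $F_{i-1}=\bigoplus_\ell A(-a_\ell)$ one checks that $(\fm F_{i-1}/\fm^2F_{i-1})_{t_i}\cong\bigoplus_{a_\ell=t_i-1}A_1$, which isolates the claim; the passage from $\partial x\notin\fm^2F_{i-1}$ to the existence of such a top-degree generator uses only the graded identification $\Tor^A_i(N,k)\cong F_i/\fm F_i$. Everything else is formal.
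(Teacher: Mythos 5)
Your proof is correct, and it takes a genuinely different (more self-contained) route than the paper. Both arguments hinge on the same translation: by the graded version of condition (2\rq{}) in \ref{interpret}, the vanishing $\nu^1_i(N)=0$ says that no homogeneous minimal generator of $F_i$ has differential lying in $\fm^2F_{i-1}$. From there the paper follows Herzog--Iyengar closely: it invokes the decomposition of $\lin^A(F)$ into linear strands $F^r$ from \cite[1.11]{HI}, uses the identification $\reg_A(N)=\sup\{r\mid F^r\ne 0\}$, and argues by contradiction --- infinite regularity forces infinitely many strands, and the generator starting each new strand $F^{r_s}$ in homological degree $n(r_s)$ has differential in $\fm^2F_{n(r_s)-1}$, so $\nu^1_{n(r_s)}(N)\ne 0$ for infinitely many degrees. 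You avoid the strand decomposition entirely: you track only the top Betti degrees $t_i$ and prove the growth bound $t_{i-1}\ge t_i-1$ directly, via the computation $(\fm F_{i-1}/\fm^2F_{i-1})_{t_i}\cong\bigoplus_{a_\ell=t_i-1}A_1$, which is valid because $\fm/\fm^2=A_1$ for a standard graded algebra; your reductions (the case $\pd_A N<\infty$, finiteness of each $t_i$) are also in order. What your approach buys: it needs nothing beyond the definition of regularity through graded Betti numbers, and it gives an explicit bound, namely $\reg_A(N)\le\max_{0\le i\le s-1}(t_i-i)$ when $\nu^1_i(N)=0$ for all $i\ge s$. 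What the paper's approach buys: since the declared purpose of the proposition is to verify that the weaker hypothesis $\nu^1_{\gg 0}(N)=0$ suffices in the existing proofs of \cite[1.12, 1.13]{HI}, retracing those proofs makes that verification transparent and keeps the argument aligned with the $\HH(\lin^A(F))$ formalism used throughout the paper. The ``in particular'' clause is handled identically in both, via the Avramov--Peeva theorem \cite{AP}.
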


\begin{proof} (Following \cite[1.12, 1.13]{HI}.)
We denote $\fm$  the maximal irrelevant ideal of $A$. Let $F$ be a minimal free graded resolution of $N$ over $A$. As described in \cite[1.11]{HI}, $U=\lin^R(F)$ decomposes as a direct sum of the linear strands complexes $F^r$, defined in {\it loc.\,cit.}, and $\reg_A(N)$ is equal to $\sup\{r\in\mathbb Z\mid F^r\ne 0\}$.

For each $r$, let $n(r)$ denote the least integer such that $(F^r)_{n(r)}\ne 0$. If $\reg_A(N)$ is infinite, then, as noted in the proof of \cite[1.12]{HI}, there exists an infinite sequence $r_1<r_2<\cdots$ with $n(r_1)<n(r_2)<\cdots$. Since $(F^{r_s})_n=0$ for $n<n(r_s)$ and $F^{r_s}$ is minimal, it follows that there exists a nonzero element of bidegree $(n(r_s), n(r_s)+r_s)$ in $F^{r_s}\otimes k$.  This element corresponds to a nonzero element of bidegree $(n(r_s),n(r_s))$ in $\HH(U)$. In particular, there exists an element $x_s$  in $F_{n(r_s)}\smallsetminus \fm F_{n(r_s)}$  such  that $\partial x_s\in \fm^2F_{n(r_s)-1}$, and this shows, using \ref{interpret}(2'), that $\nu^{1}_{n(r_s)}(N)\ne 0$ for each $s$.  This contradicts the assumption that $\nu^1_{\gg 0}(N)=0$.

For the last statement, one uses the result of Avramov and Peeva \cite[(2)]{AP}: If $\reg_A(k)<\infty$, then $A$ is Koszul. 
\end{proof}

Applying Theorem \ref{mu}, one obtains the statement of \cite[1.13]{HI}: 
\begin{corollary}
Let $A$ be a standard graded algebra. If $\ld_A(k)<\infty$, then $A$ is Koszul. 
\end{corollary}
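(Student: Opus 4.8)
The plan is to obtain this Corollary as an immediate consequence of the two preceding results, Theorem~\ref{mu} and Proposition~\ref{gradedmu1}, with no further work required. The key observation is that finiteness of the linearity defect is \emph{a priori} a much stronger vanishing statement than the one needed to invoke Proposition~\ref{gradedmu1}; we will only use a sliver of it.

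First I would record what $\ld_A(k)<\infty$ gives us. By definition of the supremum, $\ld_A(k)<\infty$ means $\ld_A(k)\le d$ for some integer $d$. All of the preceding machinery transfers verbatim to the standard graded setting by Remark~\ref{graded1}, so I would apply the graded analogue of Theorem~\ref{mu}(b): the bound $\ld_A(k)\le d$ is equivalent to the vanishing $\nu^n_i(k)=0$ for all $i>d$ and all $n\ge 0$.

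Second, I would specialize this to $n=1$ alone, discarding all the other vanishing. This yields $\nu^1_i(k)=0$ for every $i>d$, which is exactly the hypothesis $\nu^1_{\gg 0}(k)=0$. At this point the last assertion of Proposition~\ref{gradedmu1} applies directly and concludes that $A$ is Koszul, which finishes the proof and recovers the statement of \cite[1.13]{HI}.

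There is essentially no obstacle to surmount: the substance of the argument has been absorbed into Theorem~\ref{mu}(b) (the translation between linearity defect and the maps $\nu^n_i$) and into Proposition~\ref{gradedmu1} (the passage from $\nu^1_{\gg 0}(k)=0$ through finiteness of $\reg_A(k)$ to Koszulness via Avramov--Peeva). The only points demanding care are purely bookkeeping: reading both cited results in their graded incarnations as licensed by Remark~\ref{graded1}, and noticing that one does not need the full force of the $n$-indexed vanishing supplied by $\ld_A(k)<\infty$ but merely its $n=1$ shadow.
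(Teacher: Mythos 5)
Your proposal is correct and matches the paper's own argument: the paper derives this corollary precisely by applying Theorem~\ref{mu} (read in its graded incarnation) to obtain $\nu^1_{\gg 0}(k)=0$ and then invoking Proposition~\ref{gradedmu1}. The specialization to $n=1$ that you highlight is exactly the intended step.
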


\section{The linearity defect of powers of the maximal ideal}

The powers of the maximal ideal and, more generally, modules of the form $\fm^nM$ with $n>0$ and $\fm^nM\ne 0$, are known to share some of the asymptotic properties of the residue field; for example, they have the same complexity. With this thought in mind, we proceed to investigate the connection between $\ld_R(k)$ and $\ld_R(\fm^n)$. We prove in Proposition \ref{ineq} an inequality between these numbers.  

\begin{remark}
\label{ld0}
If $\ld_R(k)=0$, then $\ld_R(\fm^n)=0$ for all $n$. 

Indeed, if $\ld_R(k)=0$ then $R$ is Koszul  and $\agr R$ is a Koszul algebra. Then  $(\agr \fm)_{\ge n }$ has a linear resolution over $\agr R$ by  \cite[5.4]{IR}, hence $\ld_R(\fm^n)=0$.
\end{remark}

The next proposition provides a possibly more general result. 

\begin{proposition}
\label{ineq}
$\ld_R(\fm^n)\le \ld_R(k)$ for all $n$. 
\end{proposition}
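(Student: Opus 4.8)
The plan is to leverage the characterization of linearity defect via the maps $\nu^n$ established in Theorem \ref{mu}. Specifically, it suffices to show that for every $d$ with $\ld_R(k)\le d$, one also has $\ld_R(\fm^n)\le d$; by Theorem \ref{mu}(b) this reduces to proving that $\nu^n_i(k)=0$ for all $i>d$ (which is the hypothesis) forces $\nu^m_i(\fm^n)=0$ for all $i>d$ and all $m\ge 0$. The natural bridge is the short exact sequence of $R$-modules
\[
0\to \fm^n\to R\to R/\fm^n\to 0,
\]
which relates the syzygies of $\fm^n$ to those of $R/\fm^n$, and the fact that $\fm^n$ is (up to a shift in homological degree) a high syzygy of $R/\fm^n$. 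Since $R$ is free, the associated long exact sequence in $\Tor$ gives $\Tor^R_i(\fm^n,-)\cong \Tor^R_{i+1}(R/\fm^n,-)$ for $i\ge 1$, and I expect this isomorphism to be compatible with the canonical surjections $R/\fm^{m+1}\to R/\fm^m$ that define the $\nu$ maps, so that $\nu^m_i(\fm^n)$ is identified with $\nu^m_{i+1}(R/\fm^n)$.

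First I would reduce, via Remark \ref{flat}, to the case where $R$ is complete (or whatever is convenient for the syzygy computations). Next I would make precise the comparison between $\fm^n$ and $R/\fm^n$ at the level of the maps $\nu$: using Remark \ref{interpret}'s reinterpretation of $\nu^m_i(M)$ as the map $H_i(F/\fm^{m+1}F)\to H_i(F/\fm^m F)$, I would track how a minimal free resolution of $\fm^n$ sits inside one of $R/\fm^n$ and verify the degree shift carries over to the induced maps on homology. The crucial input is then the relationship between the resolutions of $R/\fm^n$ and of $k$: one should exploit that $\ld_R(k)\le d$ controls the linear part of the resolution of $k$, and through the filtration of $R/\fm^n$ by the powers $\fm^j/\fm^n$ (whose successive quotients $\fm^j/\fm^{j+1}$ are $k$-vector spaces, hence direct sums of copies of $k$) propagate the vanishing $\nu^m_{>d}(k)=0$ to $\nu^m_{>d}(R/\fm^n)=0$.

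The main obstacle I anticipate is the last propagation step: vanishing of $\nu$ maps is not obviously additive along short exact sequences, because the maps $\nu^m$ involve a fixed surjection $R/\fm^{m+1}\to R/\fm^m$ rather than arbitrary functorial data, and the connecting homomorphisms in the relevant long exact sequences may not respect the filtration degree cleanly. I would handle this by working directly with the complexes $F/\fm^m F$ and the explicit cycle-and-boundary descriptions in Remark \ref{interpret}, rather than with abstract long exact sequences, so that the bookkeeping of $\fm$-adic degrees stays transparent. A cleaner alternative, which I would pursue if the filtration argument becomes unwieldy, is to invoke the result \cite[5.4]{IR} used in Remark \ref{ld0}: if one can show that $\ld_R(k)\le d$ implies the truncation $(\agr\fm)_{\ge n}$ has its syzygies linear past step $d$ over $\agr R$, then $\ld_R(\fm^n)\le d$ follows directly. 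I expect the $\nu$-map route to be the most self-contained, with the filtration/additivity lemma being the technical heart of the argument.
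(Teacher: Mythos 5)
Your reduction steps are sound and match the paper's endgame: by Theorem \ref{mu}(b) it suffices to show $\nu^m_i(R/\fm^n)=0$ for all $m\ge 0$ and all $i>d+1$, where $d=\ld_R(k)$, since $\fm^n$ is a first syzygy of $R/\fm^n$ and the dimension shift is compatible with the maps $\nu$. The genuine gap is the step you yourself call the technical heart: propagating $\nu^m_{>d}(k)=0$ to the modules $R/\fm^n$. The filtration argument you sketch fails because vanishing of the maps $\nu$ is not a two-out-of-three property along short exact sequences. Concretely, consider the commutative ladder connecting the long exact sequences of $\Tor^R_{*}(-,R/\fm^{m+1})$ and $\Tor^R_{*}(-,R/\fm^{m})$ associated to $0\to\fm^{n-1}/\fm^n\to R/\fm^n\to R/\fm^{n-1}\to 0$, and suppose $\nu^m_i$ vanishes on both outer terms. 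For $x\in\Tor^R_i(R/\fm^n,R/\fm^{m+1})$, a diagram chase only shows that $\nu^m_i(R/\fm^n)(x)$ lifts to an element of $\Tor^R_i(\fm^{n-1}/\fm^n,R/\fm^{m})$; the hypothesis $\nu^m_i(\fm^{n-1}/\fm^n)=0$ says nothing about that lift, because the lift need not lie in the image of $\Tor^R_i(\fm^{n-1}/\fm^n,R/\fm^{m+1})$. So the chase stalls, and no amount of cycle-level bookkeeping in $F/\fm^mF$ rescues it, since the difficulty is structural, not notational. Your fallback via \cite[5.4]{IR} is also unavailable: that result concerns Koszul algebras, i.e.\ the case $\ld_R(k)=0$ (this is exactly Remark \ref{ld0}), and an analogue for finite positive defect is not known --- indeed, whether $\ld_R(\fm^n)=\ld_R(k)$ holds is posed as an open question immediately after Proposition \ref{ineq}.

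The idea you are missing is to exploit the symmetry of Tor in its two arguments rather than additivity in one of them. For each pair $p,q\ge 0$, the paper plays the two sequences $0\to\ov{\fm^q}\to R/\fm^{q+1}\to R/\fm^q\to 0$ and $0\to\ov{\fm^p}\to R/\fm^{p+1}\to R/\fm^p\to 0$ against each other (here $\ov{\fm^p}=\fm^p/\fm^{p+1}$), obtaining a commutative diagram of long exact sequences in which both rows and columns consist of maps of type $\nu$, taken in either Tor entry. Since $\ov{\fm^p}$ and $\ov{\fm^q}$ are $k$-vector spaces, the hypothesis kills the $\nu$ maps on them once $i>d+1$ (both $\nu_i(k)$ and $\nu_{i-1}(k)$ vanish there). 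Two one-step claims then do the work: if $\nu_i^q(R/\fm^{p+1})=0$ then $\nu_i^p(R/\fm^{q+1})=0$ (the first vanishing makes a map in the diagram surjective, and the square commutes over a zero map), and if $\nu_i^p(R/\fm^q)=0$ then $\nu_i^q(R/\fm^p)=0$ (the first vanishing makes another map injective). Starting from the trivial case $R/\fm^{0+1}=k$ and alternately applying the two claims swaps the roles of the two Tor entries while raising one exponent at a time, yielding $\nu_i^p(R/\fm^q)=0$ for all $p,q\ge 0$ and $i>d+1$, hence $\ld_R(R/\fm^p)\le d+1$ and $\ld_R(\fm^n)\le d$. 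This swapping device --- where exactness (surjectivity or injectivity of adjacent maps in the long exact sequences) substitutes for the failed additivity --- is what your outline lacks; without it, or a genuinely different replacement, the proposal cannot be completed as written.
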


\begin{proof}
 Given a finite $R$-module $M$, recall that
$$
\nu^n_i(M)\colon \Tor_i^R(M, R/\fm^{n+1})\to \Tor_i^R(M,R/\fm^{n})
$$
is induced by the surjection $R/\fm^{n+1}\to R/\fm^n$. We (ab)use the same notation for the avatar of this map which arises by switching the module entries: 
$$
\nu^n_i(M)\colon \Tor_i^R(R/\fm^{n+1}, M)\to \Tor_i^R(R/\fm^{n},M)\,.
$$
We set $\ov M=M/\fm M$.

Let $i>1$ and $p,q\ge 0$.  Consider the exact sequence
$$
0\to\ov{\fm^n}\to R/\fm^{n+1}\to R/\fm^n\to 0
$$
for $n=p$ and $n=q$ and the induced commutative  diagram with exact rows and columns.
\[
\xymatrixrowsep{1.9pc}
\xymatrixcolsep{4.1pc}
\xymatrix{
\Tor_{i}^R(\ov{\fm^q},\frac{R}{\fm^{p+1}})\ar@{->}[d]^{\gamma}\ar@{->}[r]^{\quad \nu_{i}^{p}(\ov{\fm^{q}})}&\Tor_{i}^R(\ov{\fm^q},\frac{R}{\fm^p})\ar@{->}[d]^{\partial}\ar@{->}[r]&\Tor_{i-1}^R(\ov{\fm^q},\ov{\fm^p})\ar@{->}[d]\\
\Tor_{i}^R(\frac{R}{\fm^{q+1}},\frac{R}{\fm^{p+1}})\ar@{->}[r]^{\quad \nu_{i}^{p}(R/\fm^{q+1})\quad}\ar@{->}[d]^{\nu_{i}^{q}(R/\fm^{p+1})}&\Tor_{i}^R(\frac{R}{\fm^{q+1}},\frac{R}{\fm^p})\ar@{->}[r]^{\alpha}\ar@{->}[d]^{\nu_{i}^{q}(R/\fm^{p})}&\Tor_{i-1}^R(\frac{R}{\fm^{q+1}},\ov{\fm^p})\ar@{->}[d]^{\nu_{i-1}^{q}(\ov{\fm^{p}})}\\
\Tor_{i}^R(\frac{R}{\fm^q},\frac{R}{\fm^{p+1}})\ar@{->}[r]^{\quad \nu_{i}^{p}(R/\fm^{q})}&\Tor_{i}^R(\frac{R}{\fm^q},\frac{R}{\fm^p})\ar@{->}[r]^{\beta}&\Tor_{i-1}^R(\frac{R}{\fm^q},\ov{\fm^p})\\
}
\]

Set $\ld_R(k)=d$ and assume $i>d+1$.  Then $\nu_{i-1}^{n}=\nu_{i}^n=0$ for all $n\ge 0$,  by Theorem \ref{mu}.  Since $\ov{\fm^p}$ and $\ov{\fm^q}$ are $k$-vector spaces, we also have $\nu_{i-1}^{q}(\ov{\fm^{p}})=\nu_{i}^{p}(\ov{\fm^{q}})=0$. 

\begin{Claim1}
 If $\nu_i^{q}(R/\fm^{p+1})=0$, then $\nu_i^{p}(R/\fm^{q+1})=0$.
\end{Claim1}

\begin{Claim2}
If $\nu_i^{p}(R/\fm^q)=0$, then $\nu_i^{q}(R/\fm^{p})=0$.
\end{Claim2}

Claim 1 is obtained  by analyzing the upper left square  of the diagram: If $\nu_i^{q}(R/\fm^{p+1})=0$, then $\gamma$ is surjective. Since $\nu_{i}^{p}(\ov{\fm^{q}})=0$, the commutativity of the square gives that $\nu_i^{p}(R/\fm^{q+1})\circ \gamma=0$, hence $\nu_i^{p}(R/\fm^{q+1})=0$. 

Claim 2 is obtained  by analyzing the lower right square of the diagram: If $\nu_i^{p}(R/\fm^{q})=0$, then the  map $\beta$ is injective. Since $\nu_{i-1}^{q}(\ov{\fm^p})=0$, the commutativity of the square  yields that $\beta\circ \nu_i^{q}(R/\fm^p)=0$, hence $\nu_i^{q}(R/\fm^p)=0$.

Hence these claims hold for all $p,q\ge 0$. Note that $\nu_i^p(R/\fm^{q+1})=0$ for $q=0$, since $\nu_i^p=0$. 
We then apply  Claim 2  and we get $\nu_i^1(R/\fm^{p})=0$. Then we apply Claim 1 and we get $\nu_i^{p-1}(R/\fm^2)=0$. Then Claim 2 yields $\nu_i^2(R/\fm^{p-1})=0$. An inductive repeated use of Claim 1 and Claim 2 then yields that $\nu_i^p(R/\fm^q)=0$ for all $p, q\ge 0$ and all $i>d+1$. Using Theorem \ref{mu}  we conclude $\ld_R(R/\fm^p)\le  d+1$ for all $p\ge 0$.

 If $n>0$ and $\fm^n\ne 0$,  then $\fm^n$ is a first syzygy in a minimal free resolution of $R/\fm^n$ and we conclude 
$\ld_R(\fm^n)\le \max\{0,\ld_R(R/\fm^n)-1\}\le d=\ld_R(k)$.
\end{proof}

\begin{question}
Is it true that $\ld_R(\fm^n)=\ld_R(k)$ for all $n>0$? 
\end{question}

Note that for $n=1$ this question subsumes Question 1 in the introduction, for if $\ld_R(k)<\infty$ then $\ld_R(\fm)=\ld_R(k)-1$, unless $\ld_R(k)=0$. 

\section{Tate resolutions and the map $\nu^1$}

 In this section we are concerned with Question 2 in the introduction, for $n=1$: If $\nu^1_{\gg 0}=0$, does it follow that $\nu^1_{>0}=0$?  Proposition \ref{13}, whose proof uses Tate resolutions,  gives a partial answer. 

Recall  that $\nu^n_1=0$ for all $n\ge 0$, as noted in Remark \ref{mu_1}.  In particular, the condition $\nu_1^1=0$ holds for all local rings $(R,\fm,k)$.  However, as we will see in Section 5, the condition $\nu^1_2=0$ does not hold for all rings.  

\begin{proposition}
\label{13}
If $\nu^1_{2n}=0$ for some $n> 0$, then $\nu^1_2=0$. 

\end{proposition}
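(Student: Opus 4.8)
The plan is to prove the contrapositive: assuming $\nu^1_2\neq 0$, I will show that $\nu^1_{2n}\neq 0$ for every $n>0$. The engine is the multiplicative (divided power) structure of a minimal Tate resolution, i.e.\ the acyclic closure $A=R\langle X\rangle$ of $k$, which lets one propagate a single ``nonlinear'' degree-two generator into all even homological degrees. Since the maps $\nu^1_i$ are intrinsic to $M=k$ and can be computed on any minimal free resolution, I am free to work with $A$ throughout.

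First I would record the structure of $A$ in low degrees: $A_1$ is free on degree-one variables $x_1,\dots,x_e$ with $\partial x_j=g_j$ a minimal generating set of $\fm$, while $A_2$ is free on the degree-two variables $X_2=\{y_1,\dots,y_r\}$ together with the products $x_jx_l$ $(j<l)$, where $\partial(x_jx_l)=g_jx_l-g_lx_j$ and $\partial y_t=z_t$ is a cycle whose class minimally generates $\mathrm{H}_1$ of the Koszul complex on the $g_j$. Using \ref{interpret}(2'), the vanishing $\nu^1_2=0$ is equivalent to the injectivity of the $k$-linear map $\psi\colon A_2/\fm A_2\to \fm A_1/\fm^2A_1$ sending the class of $x$ to the leading form of $\partial x$ (this is well defined since $x\in\fm A_2$ forces $\partial x\in\fm^2A_1$).

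The key step (a lemma) is then: $\nu^1_2\neq 0$ forces the existence, after an admissible change of the degree-two variables, of a variable $y\in X_2$ with $\partial y\in\fm^2A_1$. To see this I would examine $\ker\psi$. The images $\psi(\overline{x_jx_l})=\bar g_jx_l-\bar g_lx_j$ are $k$-linearly independent in $\fm A_1/\fm^2A_1$ (they are the degree-two boundaries in the Koszul complex of $\agr R$ on $\bar g_1,\dots,\bar g_e$), so any nonzero kernel element must have a nonzero coefficient on some $\bar y_t$. Such an element produces a relation $\sum_t\bar a_t\,\overline{z_t}=\sum b_{jl}(\bar g_jx_l-\bar g_lx_j)$ among leading forms; lifting it gives a genuine cycle $z=\sum_t\tilde a_tz_t-\sum b_{jl}(g_jx_l-g_lx_j)\in\fm^2A_1$ whose homology class still belongs to a minimal generating set of $\mathrm{H}_1$. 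Replacing one $y_t$ (whose coefficient $\bar a_t$ is nonzero) by a new variable $y$ with $\partial y=z$ keeps $A$ a minimal acyclic closure and yields the desired $y$ with $\partial y\in\fm^2A_1$.

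Finally I would exploit divided powers: $y^{(n)}\in A_{2n}$ is a free basis element, so $y^{(n)}\notin\fm A_{2n}$, while $\partial\bigl(y^{(n)}\bigr)=\partial(y)\,y^{(n-1)}\in \fm^2A_1\cdot A_{2n-2}\subseteq \fm^2A_{2n-1}$. By \ref{interpret}(2') applied to the minimal resolution $A$, this exhibits $\nu^1_{2n}\neq 0$, completing the contrapositive. I expect the lemma of the third paragraph to be the main obstacle: one must check that the products $x_jx_l$ cannot by themselves account for an element of $\ker\psi$, and that the change of variables turning a kernel element into an honest degree-two variable preserves minimality of the acyclic closure. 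The divided-power propagation at the end is then a one-line computation.
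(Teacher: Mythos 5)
Your proof is correct, and it runs on the same engine as the paper's: a minimal Tate resolution of $k$ together with the divided-power identity $\partial(u^{(n)})=\partial(u)\,u^{(n-1)}$, which transports a degree-two violation of condition (2') of \ref{interpret} into homological degree $2n$. The difference lies in the key intermediate step. The paper argues directly: assuming $\nu^1_{2n}=0$, it takes an arbitrary $A\in F_2$ with $\partial A\in\fm^2F_1$, expands the divided power $A^{(n)}$ in the standard basis, and reads off that the coefficient of $x_l^{(n)}$ is $a_l^n$; thus $A^{(n)}\in\fm F_{2n}$ forces each coefficient $a_l$ of a degree-two variable into $\fm$, and the leftover quadratic part $\sum b_{ij}x_ix_j$ is disposed of using minimality of $\{\partial x_i\}$ --- which is exactly the linear-independence computation you make for the images of the $x_jx_l$ under your map $\psi$. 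You instead argue the contrapositive and replace the coefficient extraction by a change-of-variables lemma: a witness for $\nu^1_2\neq 0$ is converted into an honest degree-two variable $y$ of the acyclic closure with $\partial y\in\fm^2A_1$, by re-choosing the minimal generating set of $\HH_1$ of the Koszul complex, and then $y^{(n)}$ is a basis element with $\partial\bigl(y^{(n)}\bigr)\in\fm^2A_{2n-1}$, so \ref{interpret}(2') gives $\nu^1_{2n}\neq 0$ at once. What your route buys: the divided-power step becomes trivial, since no expansion of $A^{(n)}$ for a general $A$ is needed. What it costs: you must invoke the acyclic-closure machinery in full --- that the construction can be started from any minimal generating set of $\HH_1(K)$ and always yields a minimal resolution (Gulliksen--Schoeller; see \cite[6.3]{Avr98}) --- together with the observation that the maps $\nu^1_i$ can be computed on any minimal free resolution. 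Both inputs are legitimate, and the two checkpoints you flag (that the products $x_jx_l$ alone cannot account for a kernel element of $\psi$, and that minimality survives the change of variables) are resolved correctly by your linear-independence argument and by the citation above, so the argument goes through.
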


Some preliminaries are needed.

\begin{bfchunk}{Divided powers and Tate resolutions.}
A system of divided powers on a graded $R$-algebra $A$ is an operation that for
each $j\ge 1$ and each $i\ge 0$ assigns to every $a\in A_{2i}$ an element $a^{(j)}\in A_{2ij}$ , subject to certain axioms; cf. \cite[1.7.1]{GL}. A DG$\Gamma$ R-algebra is a DG $R$ -algebra $A$ with divided powers compatible with the differential $\partial$ of $A$: $\partial(a^{(j)}) = \partial(a) a^{(j-1)}$. We denote by $|x|$ the homological degree of an element $x$. 

Given a set $\bd x=\{x_i\mid |x_i|\ge 1\}$, we let $A\langle{\bd x}\rangle$
denote a DG$\Gamma$ algebra with
  \begin{gather*}
A\otimes_R\mathsf\Lambda_*^R\bigg(\bigoplus_{\substack{x\in\bd x\\
 |x|\text{ odd}}}Rx\bigg)\otimes_R{\mathsf\Gamma}^R_*\bigg(\bigoplus_{\substack{x\in\bd x\\
 |x|\text{ even}}}Rx\bigg)
  \end{gather*}
as underlying graded algebra and differential compatible with that of $A$
and the divided powers of $x\in\bd x$. 

A Tate resolution of a surjective ring homomorphism $R\to T$ is a 
quasi-isomor\-phism $R\langle\bd x\rangle\to T$, where $\bd x=\{x_i\}_{i\ges 1}$
and $|x_j|\ge|x_i| \ge1$ holds for all $j\ge i\ge1$.  Such a resolution always exists: 
see  \cite[1.2.4]{GL}. Furthermore, such a resolution can be chosen minimally, as described in \cite[Construction 6.3.1]{Avr98}; for $T=k$ this construction yields a minimal free resolution of $k$, which we shall call a {\it minimal Tate resolution} of $k$ over $R$. 

For each sequence of integers $\mu<\dots<\nu$ and each sequence of integers $i_{\mu}\ge 1, \dots, i_{\nu}\ge 1$, the product $x_\mu^{(i_{\mu})}\dots x_{\nu}^{(i_{\nu})}$ is called a {\it normal $\Gamma$-monomial}; $1$ is considered to be a normal monomial. The normal monomials form the {\it standard basis} of $R\langle\bd x\rangle$, considered as a graded algebra over $R$. 

For each $\mu>0$ we set
$$
I_{\mu}=\{i\ge 0\colon |x_i|=\mu\}\,.
$$
\end{bfchunk}

\begin{proof}[Proof of Proposition {\rm \ref{13}}]
Let $F=R\langle\bd x\rangle$ be a minimal  Tate resolution of $k$. Let  $\mathcal B$  be the standard basis of  $F$ over $R$, as described above.  We will interpret the vanishing of the maps $\nu_i^1$ in terms of condition (2') in \ref{interpret}, which states that $\nu^1_i=0$ if and only if the following holds:  If $A\in F_{i}$ satisfies $\partial A\in \fm^2F_{i-1}$, then $A\in \fm F_i$. 

Assume that $\nu^1_{2n}=0$ and let $A\in F_2$ such that $\partial A\in \fm^2F_1$.  Expressing this element in terms of the basis $\mathcal B$, we have:
$$
A=\sum_{l\in I_2} a_lx_l+\sum_{i,j\in I_1, i<j} b_{ij}x_ix_j
$$
with $a_l, b_{ij}\in R$ for $l\in I_2$ and $i,j\in I_1$ with $i<j$.  Since $A^{(n)}\in F_{2n}$ satisfies $\partial A^{(n)}=\partial A\cdot A^{(n-1)}\in \fm^2F_{2n-1}$, the hypothesis that $\nu^1_{2n}=0$ yields $A^{(n)}\in \fm F_{2n}$. The coefficient of $x_l^{(n)}$ in the expression of $A^{(n)}$ in terms of the basis $\mathcal B$ is $a_l^{n}$, hence $a_l^n\in \fm$ and thus $a_l\in \fm$. Consider now the element: 
$$
A'=A-\sum_{l\in I_2} a_lx_l=\sum_{i,j\in I_1, i<j} b_{ij}x_ix_j
$$
To show $A\in \fm F$, it suffices to show $A'\in \fm F$. Note that $\partial A'\in \fm^2F$, since $\partial A\in \fm^2F$ and $a_l\in \fm$. 

If $i>j$, we set $b_{ij}=b_{ji}$. We compute next $\partial A'$, and we note that for each $x_j$ with $j\in I_1$, the coefficient of $x_j$ in $\partial A'$ is 
$$
c_j=\sum_{i\in I_1, i\ne j}\pm b_{ij}\partial x_i
$$
Since $\{x_j\}_{j\in I_1}$ is a basis for $F_1$ and $\partial A'\in \fm^2F$,  we conclude  $c_j\in \fm^2$. Recall that $F$ is a minimal resolution of $k$. In particular,  $\{\partial x_i\}_{i\in I_1}$ is a minimal generating set for  $\fm$. We conclude that $b_{ij}\in \fm$, hence $A'\in \fm F$ and thus $A\in \fm F$. 
\end{proof}

The next result will be needed later.

\begin{lemma}
\label{Tor2map}
Let  $R'=R/I$, with  $I\subseteq \fm^2$.  Let $\widehat R=Q/\fa$ be a minimal Cohen presentation of $R$, with $(Q,\fn,k)$ a regular local ring and $\fa\subseteq \fn^2$, and write $\widehat R'=Q/\fb$ for some $\fb$ with $\fa\subseteq \fb\subseteq \fn^2$. 

Then the map $\Tor_2^{\varphi}(k,k)\colon \Tor_2^R(k,k)\to\Tor_2^{R'}(k,k)$ induced by the projection $\varphi\colon R\to R'$ is injective if and only if $\fa\cap\fn\fb\subseteq \fn\fa$.
\end{lemma}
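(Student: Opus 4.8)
The plan is to compute $\Tor_2^{\varphi}(k,k)$ by means of the standard short exact sequence that splits $\Tor_2$ into its decomposable part and the part recording the minimal relations, and then to read off injectivity from the resulting map on relations. First I would reduce to the complete case: since completion is faithfully flat and $\widehat R\otimes_R k=k$, the natural maps $\Tor_i^R(k,k)\to\Tor_i^{\widehat R}(k,k)$ are isomorphisms, compatibly with $\varphi$, so I may replace $\varphi$ by the natural surjection $\widehat\varphi\colon Q/\fa\to Q/\fb$, where $\fa\subseteq\fb\subseteq\fn^2$. Write $V=\fn/\fn^2$. Because $I\subseteq\fm^2$ we have $\fm'/(\fm')^2=\fn/\fn^2=V$, so $\Tor_1^{\varphi}(k,k)\colon V\to V$ is the identity; this is the only input about $\varphi$ I will need besides the inclusion $\fa\subseteq\fb$.

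Next I would set up, for the complete ring $R=Q/\fa$, the natural short exact sequence
\[
0\to\textstyle\bigwedge^2 V\to\Tor_2^{R}(k,k)\to\fa/\fn\fa\to 0,
\]
in which $\bigwedge^2 V$ is the decomposable submodule (the image of the product $\Tor_1^R(k,k)\otimes_k\Tor_1^R(k,k)\to\Tor_2^R(k,k)$) and the quotient is the first Koszul homology $H_1(K^R)\cong\Tor_1^Q(R,k)\cong\fa/\fn\fa$. To make this sequence and its functoriality transparent I would realize $\Tor^R(k,k)$ by a minimal Tate resolution $R\langle x_1,\dots,x_n;T_1,\dots\rangle$ built from the Koszul complex $K^R$ on a minimal generating set of $\fm$ by adjoining degree-two variables $T_l$ that kill a basis of $H_1(K^R)$; then $\Tor_2^R(k,k)=\bigwedge^2 V\oplus\Span_k\{T_l\}$, and $T_l\mapsto[\partial T_l]$ identifies $\Span_k\{T_l\}$ with $H_1(K^R)=\fa/\fn\fa$. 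The analogous data over $R'$ is built from the same $K^Q$, so that $K^{R'}=K^R\otimes_R R'$.

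Then I would lift $\varphi$ to a morphism of these Tate resolutions fixing the variables $x_i$, obtaining a commutative ladder whose rows are the two sequences above. The left vertical map is $\bigwedge^2$ of the identity $\Tor_1^{\varphi}(k,k)$, hence an isomorphism; the right vertical map is the map $j\colon\fa/\fn\fa\to\fb/\fn\fb$ induced by the inclusion $\fa\subseteq\fb$, since the indecomposable part of $\varphi(T_l)$ is the class $[\partial T_l]$ read in $H_1(K^{R'})$, which is exactly $\Tor_1^Q(-,k)$ applied to $R\thra R'$. The snake lemma then gives $\ker\Tor_2^{\varphi}(k,k)\cong\ker j$. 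Finally, $j(a+\fn\fa)=0$ precisely when $a\in\fn\fb$, so $\ker j=(\fa\cap\fn\fb)/\fn\fa$ (note $\fn\fa\subseteq\fa\cap\fn\fb$); hence $\Tor_2^{\varphi}(k,k)$ is injective if and only if $\fa\cap\fn\fb\subseteq\fn\fa$, as claimed.

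The main obstacle is the naturality asserted in the previous paragraph, namely the identification of the induced map on the relation quotients with the inclusion-induced map $\fa/\fn\fa\to\fb/\fn\fb$. This requires compatible acyclic closures over $R$ and $R'$ built from the common regular ring $Q$, together with careful bookkeeping of how the degree-two Tate variables $T_l$ map under $\varphi$; once this is in place, the remaining steps (the completion reduction, the snake lemma, and the computation of $\ker j$) are formal.
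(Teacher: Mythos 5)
Your proof is correct, and in substance it follows the same route as the paper: reduce to the complete case, observe that $\Tor_1^{\varphi}(k,k)$ is the identity on $V=\fn/\fn^2$, identify the kernel of $\Tor_2^{\varphi}(k,k)$ with the kernel of the induced map on degree-two indecomposable quotients, identify those quotients with $\fa/\fn\fa$ and $\fb/\fn\fb$, and compute the kernel as $(\fa\cap\fn\fb)/\fn\fa$. The difference is one of execution: the paper obtains the two key identifications by citation --- the kernel of $\Tor_2^{\varphi}(k,k)$ equals the kernel of $\pi_2(\varphi)$ by Avramov's work on small homomorphisms \cite[Corollary 1.3(b)-(c)]{Av}, and $\pi_2(R)\cong\fa/\fn\fa$ by Gulliksen--Levin \cite[Proposition 3.3.4]{GL} --- whereas you prove exactly these facts by hand, realizing $\Tor^R_{*}(k,k)$ via an acyclic closure built on the Koszul complex, exhibiting the exact sequence $0\to\bigwedge^2 V\to\Tor_2^R(k,k)\to\fa/\fn\fa\to 0$, and applying the snake lemma to the comparison ladder. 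Note that your quotient $\Tor_2^R(k,k)/\bigwedge^2 V$ is precisely the paper's $\pi_2(R)$: in homological degree $2$ the $\Gamma$-indecomposables coincide with the ordinary indecomposables, since divided powers of even elements of positive degree first contribute in degree $4$. What your version buys is self-containedness and a transparent check of the naturality you rightly flag as the main obstacle (a lift of $\varphi$ to acyclic closures fixing the Koszul variables sends each $T_l$ to its $H_1$-expansion in the $T'_m$ plus a decomposable or non-minimal error term, which dies in the quotient); what the paper's version buys is brevity, and the cited results also control higher homological degrees, which are not needed here.
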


Some preliminaries are needed for the proof. 

\begin{chunk}
Let $A$ be a DG$\Gamma$ algebra and let $A_{>0}$ denote the ideal of elements of positive degree. The module of indecomposables of $A$ is the quotient of $A_{>0}$ by the submodule generated by all elements of the form $uv$ with $u,v\in A_{>0}$ and $w^{(n)}$ with $w\in A_{2i}$, for $n\ge 0$, $i>0$. We denote by $\pi_*(R)$ the module of $\Gamma$-indecomposables of $\Tor_{*}^R(k,k)$, where the DG$\Gamma$ algebra structure on $\Tor_{*}^R(k,k)$ is induced from a minimal Tate resolution of $k$. A surjective homomorphism $\varphi\colon R\to R'$ of local rings induces canonically a map 
$
\pi_*(\varphi)\colon \pi_*(R)\to\pi_*(R')$. 
\end{chunk}

\begin{proof}[Proof of Lemma {\rm \ref{Tor2map}}]
The injectivity of the map $\Tor_2^{\varphi}(k,k)$  is invariant under completion, hence we may assume $R=Q/\fa$ and $R'=Q/\fb$. 

Note that $\pi_1(\varphi)=\Tor_1^{\varphi}(k,k)$ is an isomorphism, since both $\pi_1(R)=\Tor_1^R(k,k)$ and $\pi_1(R')=\Tor_1^{R'}(k,k)$ can be canonically identified with $\fn/\fn^2$. 

In view of \cite[Corollary 1.3(b)-(c)]{Av}, the kernel of the map $\Tor_2^{\varphi}(k,k)$ can be identified with the kernel of the map $\pi_2(\varphi)$. The proof of \cite[Proposition 3.3.4]{GL} canonically identifies $\pi_2(R)$ with $\fa/\fn\fa$ and $\pi_2(R')$ with $\fb/\fn\fb$. Thus the map $\pi_2(\varphi)$ can be canonically indentified with the map $\fa/\fn\fa\to \fb/\fn\fb$ induced by the inclusion $\fa\subseteq \fb$. The kernel of this map is $(\fa\cap \fn\fb)/\fn\fa$, and the conclusion follows. 
\end{proof}

\section {Finite generation of the Yoneda algebra over the Koszul dual}

We consider now the graded algebra with Yoneda product  $E=\Ext_R(k,k)$.  Let $R^!$ denote the $k$-subalgebra of $E=\Ext_R(k,k)$ generated by its elements of degree $1$; this is sometimes referred to as {\it the Koszul dual of $R$}. Note that the Yoneda product gives $E$ a structure of right module over $R^!$. 

In this section we consider the following: 

\begin{Question3}
\label{Equestion}
If $E$ is finitely generated as a right module over $R^!$, does it follow that $E=R^!$? 
\end{Question3}

As discussed after Theorem  \ref{implications}, Question 3 arises by interpreting  Question 2 in view of the following  observation: 

\begin{remark}
\label{yoneda1}
Let $i>0$. Then $\nu^1_i=0$ if and only if the Yoneda multiplication  map 
$E^{i-1}\otimes E^1\to  E^{i}$  is surjective. 

Indeed, note that $\nu^1_i=0$ if and only if  $\Hom_R(\nu^1_i,k)=0$ if and only if the map
$
\Ext^{i}_R(k,k)\to\Ext^{i}_R(R/\fm^2,k)
$
induced by the projection $R\to R/\fm^2$ is zero. The statement then follows by applying a result of Roos \cite[Corollary 1]{Ro}. 

For $i=1$, we recover the fact that $\nu^1_1=0$.   
\end{remark}
 
Let $\rho_i\colon \Ext^i_{R/\fm^2}(k,k)\to\Ext^i_R(k,k)$ denote the graded algebra map induced by the canonical projection $R\to R/\fm^2$.   

\begin{lemma}
\label{yoneda2}
Let $i\ge  0$. Consider the following statements:
\begin{enumerate}[\quad\rm(a)]
\item $\rho_{i}$ is surjective;
\item $\rho_{i+1}$ is surjective;
\item $\nu^1_{i+1}=0$.
\end{enumerate}
Then: {\rm (b)} implies {\rm (c)}; {\rm (a)} and {\rm (c)} imply {\rm (b)}. 

In particular, $\nu^1_2=0$ if and only if $\rho_2$ is surjective. 

\end{lemma}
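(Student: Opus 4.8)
The plan is to translate both hypotheses into a single statement about the Koszul dual $R^!\subseteq E$ and then read off the implications by comparing graded components. The guiding observation is that the image of the graded algebra map $\rho\colon \Ext_{R/\fm^2}(k,k)\to E$ ought to be exactly $R^!$. Granting this, surjectivity of $\rho_j$ becomes the assertion $(R^!)^j=E^j$, where $(R^!)^j$ is the degree-$j$ component of $R^!$; that is, $E^j$ is spanned by $j$-fold Yoneda products of degree-one classes. On the other side, Remark \ref{yoneda1} already records that $\nu^1_{j}=0$ is equivalent to surjectivity of the Yoneda product $E^{j-1}\otimes E^1\to E^j$, i.e.\ to $E^j=E^{j-1}\cdot E^1$. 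Once both conditions are phrased in these terms, the lemma is purely formal.

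First I would establish that $\Ima\rho=R^!$. Set $S=R/\fm^2$ and note $\fm_S^2=0$, so $S$ is a local ring with square-zero maximal ideal; such a ring is Koszul and $\Ext_S(k,k)$ is generated in degree $1$ (it is the tensor algebra on $\Ext^1_S(k,k)$). Next, $\rho_1$ is an isomorphism, since both $\Ext^1_S(k,k)$ and $\Ext^1_R(k,k)=E^1$ are canonically identified with $(\fm/\fm^2)^{*}$, and $\rho_1$ is the natural comparison map between them. Because $\rho$ is a $k$-algebra homomorphism whose source is generated in degree $1$, its image is the subalgebra of $E$ generated by $\Ima\rho_1=E^1$, which is by definition $R^!$. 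Hence $\rho_j$ is surjective if and only if $(R^!)^j=E^j$.

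With this dictionary in place the two implications follow by degree-counting. For (b)$\Rightarrow$(c): if $(R^!)^{i+1}=E^{i+1}$, then using that $R^!$ is generated in degree one and $(R^!)^1=E^1$ we get $E^{i+1}=(R^!)^{i+1}=(R^!)^i\cdot E^1\subseteq E^i\cdot E^1\subseteq E^{i+1}$, so the Yoneda product $E^i\otimes E^1\to E^{i+1}$ is surjective, which is $\nu^1_{i+1}=0$ by Remark \ref{yoneda1}. For (a) and (c)$\Rightarrow$(b): Remark \ref{yoneda1} turns (c) into $E^{i+1}=E^i\cdot E^1$, and substituting $E^i=(R^!)^i$ from (a) yields $E^{i+1}=(R^!)^i\cdot E^1=(R^!)^{i+1}$, i.e.\ (b). The final assertion then comes from taking $i=1$: here statement (a) is surjectivity of $\rho_1$, which always holds by the isomorphism above, so (b)$\Leftrightarrow$(c) collapses to ``$\rho_2$ surjective if and only if $\nu^1_2=0$''.

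The main obstacle is the identification $\Ima\rho=R^!$, and specifically the input that $\Ext_{R/\fm^2}(k,k)$ is generated in degree $1$. This is the one place where the square-zero structure of the maximal ideal of $R/\fm^2$ is genuinely used (via Koszulness of square-zero extensions together with the isomorphism $\rho_1$); everything after that is bookkeeping on graded pieces and an appeal to Remark \ref{yoneda1}.
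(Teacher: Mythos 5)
Your proof is correct and follows essentially the same route as the paper: both rest on Roos's theorem that $\Ext_{R/\fm^2}(k,k)$ is the free tensor algebra on its degree-one part, the fact that $\rho_1$ is an isomorphism, and Remark \ref{yoneda1}. The only difference is organizational: where the paper chases a commutative square of Yoneda multiplication maps, you package the same multiplicativity of $\rho$ into the identification $\Ima\rho=R^!$ and then compare graded components.
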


\begin{proof}
Consider the commutative diagram: 
\[
\xymatrixrowsep{1.7pc}
\xymatrixcolsep{3.6pc}
\xymatrix{
\Ext^{i+1}_{R/\fm^2}(k,k)\ar@{->}[r]^{\rho_{i+1}}\ar@{<-}[d]_{\cong}&\Ext^{i+1}_R(k,k)\ar@{<-}[d]\\
\Ext_{R/\fm^2}^{i}(k,k)\otimes\Ext_{R/\fm^2}^{1}(k,k)\ar@{->}[r]^{\quad\rho_i\otimes\rho_1}&\Ext_{R}^{i}(k,k)\otimes\Ext_{R}^{1}(k,k)}
\]
where the vertical maps are Yoneda products. Note that the left vertical map is an isomorphism, because $\Ext_{R/\fm^2}(k,k)$ is the free tensor algebra on $\Ext_{R/\fm^2}^1(k,k)$, cf.\ \cite[Corollary 3]{Ro}. Also, note that $\rho_1$ is an isomorphism. The conclusion then follows from the commutativity of the diagram and  \ref{yoneda1}. 
\end{proof}

We can reformulate Question 2 in the introduction, for $n=1$, in terms of the maps $\rho$:
\begin{question}
If $\rho_{\gg 0}$ is surjective, does it follow that $\rho_{>0}$ is surjective? 
\end{question}

Let $\widehat R\cong Q/\fa$ be a minimal Cohen presentation of $R$, with $(Q,\fn)$ a regular local ring and $\fa\subseteq \fn^2$.  Noting that the left-hand side is independent on the choice of the presentation, we set: 
$$
s(R)=\inf\{i\ge 1\mid \fa\cap \fn^{i+2}\subseteq \fn\fa\}
$$

\begin{lemma}
\label{s}
$\nu^1_2=0$  if and only if $s(R)=1$. 
\end{lemma}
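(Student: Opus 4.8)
The plan is to connect the vanishing $\nu^1_2=0$ to the Tor-injectivity statement of Lemma~\ref{Tor2map} and then read off the numerical condition $s(R)=1$. First I would use Remark~\ref{yoneda1} (or the computation in \ref{interpret}) to record that $\nu^1_2=0$ is equivalent to a statement about $\Tor_2$: dualizing, $\nu^1_2=0$ iff $\Hom_R(\nu^1_2,k)=0$ iff the map $\Ext^2_R(k,k)\to\Ext^2_R(R/\fm^2,k)$ induced by $R\to R/\fm^2$ vanishes. The cleaner route, however, is to take $R'=R/\fm^2$ in Lemma~\ref{Tor2map}. Indeed, the map $\nu^1_2=\nu^1_2(k)$ fits into the framework of \ref{interpret} where the relevant surjection is $R/\fm^2\to R/\fm=k$; I expect that the vanishing of $\nu^1_2(k)$ is exactly the injectivity of $\Tor_2^{\varphi}(k,k)\colon\Tor_2^R(k,k)\to\Tor_2^{R/\fm^2}(k,k)$ for the projection $\varphi\colon R\to R/\fm^2$. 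So the first key step is to justify this identification, probably again via Roos's result and the observation that $\nu^1_2=0$ means every second syzygy relation that becomes trivial mod $\fm^2$ was already trivial.

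Granting that reduction, I would apply Lemma~\ref{Tor2map} with $R'=R/\fm^2$. Write $\widehat R\cong Q/\fa$ for the minimal Cohen presentation with $(Q,\fn,k)$ regular and $\fa\subseteq\fn^2$. Then $\widehat{R/\fm^2}=Q/\fb$ where $\fb$ is the preimage of $\fm^2$ in $Q$; since $\fm=\fn/\fa$, the ideal $\fb$ satisfies $\fa\subseteq\fb\subseteq\fn^2$ and in fact $\fb=\fn^2+\fa=\fn^2$ (because $\fa\subseteq\fn^2$). Thus $\fb=\fn^2$. Lemma~\ref{Tor2map} then says $\Tor_2^{\varphi}(k,k)$ is injective if and only if $\fa\cap\fn\fb\subseteq\fn\fa$, i.e.\ if and only if $\fa\cap\fn^3\subseteq\fn\fa$.

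Finally I would compare $\fa\cap\fn^3\subseteq\fn\fa$ with the defining condition for $s(R)$. By definition $s(R)=\inf\{i\ge 1\mid \fa\cap\fn^{i+2}\subseteq\fn\fa\}$, so the condition $\fa\cap\fn^3\subseteq\fn\fa$ is precisely $\fa\cap\fn^{1+2}\subseteq\fn\fa$, which holds iff $s(R)=1$ (here one uses that the sequence of conditions is nested: $\fa\cap\fn^{i+2}\supseteq\fa\cap\fn^{i+3}$, so the infimum equals $1$ exactly when the $i=1$ condition already holds, since $i\ge 1$ in the definition). Chaining the three equivalences $\nu^1_2=0 \iff \Tor_2^{\varphi}(k,k)$ injective $\iff \fa\cap\fn^3\subseteq\fn\fa \iff s(R)=1$ gives the claim.

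The main obstacle I anticipate is the very first step: cleanly establishing that $\nu^1_2(k)=0$ is equivalent to injectivity of $\Tor_2^{\varphi}(k,k)$ for $\varphi\colon R\to R/\fm^2$. The maps $\nu^n_i(M)$ are defined by varying the \emph{second} argument of $\Tor$ (the quotient $R/\fm^{n+1}\to R/\fm^n$), whereas Lemma~\ref{Tor2map} concerns the map induced on $\Tor^R_2(k,k)\to\Tor^{R'}_2(k,k)$ by a change of \emph{ring} $\varphi\colon R\to R'$. I would need to check that the dual formulation of $\nu^1_2=0$ from Remark~\ref{yoneda1}, namely the vanishing of $\Ext^2_R(k,k)\to\Ext^2_{R/\fm^2}(k,k)$ — or rather its relation to the ring-change map on $\Tor$ — matches the hypothesis of Lemma~\ref{Tor2map}; this is where one must be careful about which map is being dualized and about the direction of arrows, and I suspect the paper handles it by invoking Roos's result together with the $\Ext$ reformulation rather than by a direct $\Tor$ computation.
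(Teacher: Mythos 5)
Your overall route is the paper's own: reduce $\nu^1_2=0$ to injectivity of $\Tor_2^{\varphi}(k,k)$ for the projection $\varphi\colon R\to R/\fm^2$, apply Lemma \ref{Tor2map} with $R'=R/\fm^2$, and read off $s(R)=1$. Your second and third paragraphs are correct and are in fact more explicit than the paper, which simply says ``apply Lemma \ref{Tor2map}'': your identification $\fb=\fn^2+\fa=\fn^2$ and the observation that $s(R)=1$ holds iff the $i=1$ condition $\fa\cap\fn^3\subseteq\fn\fa$ holds are exactly the details the paper suppresses.

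The genuine gap is the step you yourself flag: the equivalence ``$\nu^1_2=0$ iff $\Tor_2^{\varphi}(k,k)$ is injective'' is asserted as an expectation, not proved, and it is the crux of the lemma. It does not follow formally from \ref{interpret}, nor from the reformulation in Remark \ref{yoneda1} (the map $\Ext^2_R(k,k)\to\Ext^2_R(R/\fm^2,k)$ there varies the first argument over the fixed ring $R$, whereas $\Tor_2^{\varphi}(k,k)$ is a change-of-rings map, as you correctly note). The missing bridge is Lemma \ref{yoneda2}, which was available to you but which you never invoke: its ``in particular'' clause states precisely that $\nu^1_2=0$ iff $\rho_2\colon \Ext^2_{R/\fm^2}(k,k)\to\Ext^2_R(k,k)$ is surjective, and its proof is not a formality --- it combines Roos's result behind Remark \ref{yoneda1} ($\nu^1_i=0$ iff the Yoneda product $E^{i-1}\otimes E^1\to E^i$ is onto) with Roos's theorem that $\Ext_{R/\fm^2}(k,k)$ is the free tensor algebra on $\Ext^1_{R/\fm^2}(k,k)$, plus the fact that $\rho_1$ is an isomorphism. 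Once that lemma is cited, $k$-vector-space duality identifies $\Hom_R(\rho_2,k)$ with $\Tor_2^{\varphi}(k,k)$, so $\rho_2$ is surjective iff $\Tor_2^{\varphi}(k,k)$ is injective, and the rest of your argument goes through verbatim. So the proof is completable along exactly the lines you sketch, but as written its central equivalence remains a conjecture rather than an established step.
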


\begin{proof}
As proved above,  $\nu^1_2=0$ if and only if $\rho_2$ is surjective. The induced map $\Hom_R(\rho_2,k)$ can be indentified with the map
$$
\Tor_2^{\varphi}(k,k)\col \Tor^R_2(k,k)\to\Tor_2^{R/\fm^2}(k,k)
$$
induced by the surjection $\varphi\colon R\to R/\fm^2$. Hence $\rho_2$ is surjective if and only if $\Tor_2^{\varphi}(k,k)$ is injective.  Apply then Lemma \ref{Tor2map}. 
\end{proof}

We are now ready to translate information about the maps $\nu^1$ in terms of  the Yoneda algebra. When we talk about finite generation of $E$ over $R^!$, we mean finite generation as a right $R^!$-module. 

\begin{theorem}
\label{implications}
Let $r\ge 1$. The following implications hold:
\[
\xymatrixrowsep{1.7pc}
\xymatrixcolsep{2.8pc}
\xymatrix{
\nu^1_{>0}=0\ar@{=>}[d]_-{\textstyle(1)\ }\ar@{<=>}[r]^-{\textstyle(5)}& E=R^{!}\ar@{=>}[d]_-{\textstyle(3)\ }\\
 \nu^1_{>r}=0\ar@{=>}[d]_-{\textstyle(2)\ }\ar@{<=>}[r]_-{\textstyle(6)\ }&\text{\begin{tabular}{c}
$E$ is generated over $R^{!}$\\
by elements of degree r
\end{tabular}} 
\ar@{=>}[d]_-{\textstyle(4)\ }\\
\nu^1_2=0\ar@{<=>}[r]^-{\textstyle(7)}&s(R)=1}
\]
\end{theorem}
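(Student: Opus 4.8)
The plan is to establish the three horizontal equivalences (5), (6), (7) and the two left-hand vertical implications (1) and (2); the right-hand vertical implications (3) and (4) then follow formally, by transporting the left-hand implications across the horizontal equivalences. Among these, (7) requires no new work: it is precisely Lemma \ref{s}.

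To prove (5) and (6) I would translate the vanishing of the maps $\nu^1$ into the Yoneda algebra by means of Remark \ref{yoneda1}, according to which, for $i>0$, the condition $\nu^1_i=0$ is equivalent to surjectivity of the multiplication $E^{i-1}\otimes_k E^1\to E^i$, i.e. to $E^i=E^{i-1}E^1$. For (5), the hypothesis $\nu^1_{>0}=0$ then says $E^i=E^{i-1}E^1$ for all $i>0$, which by induction is equivalent to $E^i=(E^1)^i$ for all $i$, that is, to $E$ being generated as a $k$-algebra by $E^1$; since $R^{!}$ is by definition this subalgebra, (5) amounts to $E=R^{!}$. For (6), the same translation gives that $\nu^1_{>r}=0$ is equivalent to $E^i=E^{i-1}E^1$ for all $i>r$, and I would show, working with the graded right $R^{!}$-module structure, that this is equivalent to $E$ being generated over $R^{!}$ in degrees $\le r$ (the reading I adopt for ``by elements of degree $r$''): iterating $E^i=E^{i-1}E^1$ down to degree $r$ gives $E^i=E^r(E^1)^{i-r}\subseteq E^r\cdot R^{!}$ for $i>r$, and conversely, if $E=\sum_{j\le r}E^j\cdot R^{!}$ then for $i>r$ every degree-$i$ element is a sum of products $e\cdot w$ with $\deg e\le r$ and $w\in R^{!}$ of positive degree, so factoring a degree-one element off the right of $w$ exhibits it in $E^{i-1}E^1$.

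The two left-hand vertical implications are then short. Implication (1) is immediate, since $r\ge 1$ forces $\{i>r\}\subseteq\{i>0\}$. For (2), given $\nu^1_{>r}=0$, choose $n=r>0$, so that $2n=2r>r$ and hence $\nu^1_{2n}=0$; Proposition \ref{13} then yields $\nu^1_2=0$. Finally I would read off the right-hand column: (3) is the composite of (5), (1) and (6) ($E=R^{!}$ is equivalent to $\nu^1_{>0}=0$, which implies $\nu^1_{>r}=0$, which is equivalent to generation in degrees $\le r$), and (4) is the composite of (6), (2) and (7) (generation in degrees $\le r$ is equivalent to $\nu^1_{>r}=0$, which implies $\nu^1_2=0$, which is equivalent to $s(R)=1$).

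I expect the only genuinely substantive step to be the module-theoretic equivalence in (6): fixing the intended meaning of ``generated over $R^{!}$ by elements of degree $r$'' and checking both inclusions with due care for the right-module structure and for the fact that positive-degree elements of $R^{!}$ split off a degree-one factor on the right. Everything else is either a direct citation (Lemma \ref{s}, Proposition \ref{13}, Remark \ref{yoneda1}) or a formal chase around the diagram.
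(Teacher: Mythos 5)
Your proposal is correct and follows essentially the same route as the paper: the paper's own proof consists precisely of citing Remark \ref{yoneda1} for (5) and (6), Proposition \ref{13} for (2), and Lemma \ref{s} for (7), leaving (1), (3), (4) as formal consequences, exactly as you do. Your additional spelling-out of (6) — including reading ``generated by elements of degree $r$'' as generation in degrees $\le r$, which is the only interpretation under which implication (3) makes sense — is a faithful elaboration of what the paper leaves implicit, not a different argument.
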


\begin{proof}

The equivalences (5) and (6) are given by \ref{yoneda1}. The implication (2) is given by Proposition \ref{13}. The equivalence  (7) is given by  Lemma \ref{s}. 
\end{proof}

In view of the implications (5) and (6) in the Theorem, observe that Question 2 in the introduction can be reformulated, for $n=1$,  as follows:  Is the implication (3) reversible?  This is in fact Question 3. We give below some answers.

\begin{corollary}
If $\Ext_R(k,k)$ is generated over $R^{!}$ by its elements of degree $2$, then $\Ext_R(k,k)=R^{!}$. 
\end{corollary}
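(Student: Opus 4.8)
The plan is to deduce this corollary directly from Theorem~\ref{implications} by specializing the diagram to $r=2$ and then chaining together the equivalences it provides, together with Proposition~\ref{13}. Write $E=\Ext_R(k,k)$. The hypothesis that $E$ is generated over $R^!$ by its elements of degree $2$ is exactly the lower box of the diagram in Theorem~\ref{implications} in the case $r=2$, so the whole argument amounts to reading the diagram, with one small caveat: we must travel \emph{against} the direction of the arrows (from ``generated in degree $2$'' back up to $E=R^!$), so the conclusion is not a mere transitive chase but requires reassembling the vanishing of $\nu^1_i$ over all $i>0$.

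First I would use equivalence (6) of Theorem~\ref{implications} with $r=2$ to translate the hypothesis into the vanishing statement $\nu^1_{>2}=0$, i.e. $\nu^1_i=0$ for all $i>2$. In particular $\nu^1_4=0$, so Proposition~\ref{13} applied with $n=2$ yields $\nu^1_2=0$; equivalently, this is implication (2) of the theorem specialized to $r=2$.

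Next I would assemble the complete vanishing. We already have $\nu^1_1=0$ for every local ring by Remark~\ref{mu_1}, we have just obtained $\nu^1_2=0$, and $\nu^1_i=0$ for all $i>2$ by hypothesis. Since the ranges $i=1$, $i=2$, and $i>2$ together exhaust all $i>0$, this gives $\nu^1_{>0}=0$. Finally, equivalence (5) of Theorem~\ref{implications} converts $\nu^1_{>0}=0$ into $E=R^!$, which is the desired conclusion.

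I do not expect any genuine obstacle here: every substantive ingredient (the dictionary between surjectivity of Yoneda multiplication and vanishing of the maps $\nu^1$ from Remark~\ref{yoneda1}, and the propagation result $\nu^1_{2n}=0\Rightarrow\nu^1_2=0$ from Proposition~\ref{13}) has already been established. The only point that needs care is the bookkeeping step noted above — that $\nu^1_1=0$, $\nu^1_2=0$, and $\nu^1_{>2}=0$ jointly cover all positive indices — which is immediate, so the corollary is essentially a formal consequence of the preceding results.
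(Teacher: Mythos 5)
Your proof is correct and follows exactly the same route as the paper: translate the hypothesis into $\nu^1_{>2}=0$ via equivalence (6), obtain $\nu^1_2=0$ from implication (2) (i.e.\ Proposition~\ref{13}), combine with $\nu^1_1=0$ from Remark~\ref{mu_1} to get $\nu^1_{>0}=0$, and conclude $E=R^!$ by equivalence (5). The paper's proof is just a terser statement of this same chain of reasoning.
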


\begin{proof}
Recall that $\nu^1_1=0$. The implication (2) in Theorem \ref{implications} shows then that $\nu_{> 2}^1=0$ if and only if $\nu_{>0}^1=0$. The conclusion is then given by the equivalences (5) and (6). 
\end{proof}

\begin{corollary}
\label{gradedExt}
Let $A$ be a standard graded $k$-algebra. If $\Ext_A(k,k)$ is finitely generated over $A^{!}$, then  $\Ext_A(k,k)=A^{!}$ and $A$ is Koszul.
\end{corollary}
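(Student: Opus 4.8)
The plan is to convert the finite generation hypothesis into the vanishing $\nu^1_{\gg 0}=0$, deduce that $A$ is Koszul from Proposition \ref{gradedmu1}, and then run the argument in reverse: Koszulness forces $\ld_A(k)=0$, which by Theorem \ref{mu} makes \emph{all} of the maps $\nu^1_{>0}$ vanish, and this last condition is exactly the equality $E=A^!$. Throughout I would work in the graded setting, which is legitimate because all of the preceding results adapt verbatim, as recorded in Remark \ref{graded1}.

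First I would observe that, since $E=\Ext_A(k,k)$ is a graded right module over the graded subalgebra $A^!$, finite generation means that $E$ is generated by finitely many homogeneous elements, hence by homogeneous elements of degree at most $r$ for some $r\ge 1$. Because $A^!$ is generated by $A^!_1=\Ext^1_A(k,k)$, being generated in degrees $\le r$ forces $E^i=E^{i-1}\cdot E^1$ for every $i>r$; by the graded form of Remark \ref{yoneda1} this says precisely that $\nu^1_i=0$ for all $i>r$, that is, $\nu^1_{\gg 0}=0$. Equivalently, one invokes the equivalence (6) of Theorem \ref{implications}.

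Next I would apply Proposition \ref{gradedmu1}: since $\nu^1_{\gg 0}=0$, the algebra $A$ is Koszul, which already proves the second assertion. It then remains to upgrade $\nu^1_{\gg 0}=0$ to $\nu^1_{>0}=0$. For this I use that Koszulness of $A$ means $\ld_A(k)=0$; Theorem \ref{mu}(c) then yields $\nu^n_i=0$ for all $i>0$ and all $n\ge 0$, in particular $\nu^1_{>0}=0$. Finally, the graded form of the equivalence (5) of Theorem \ref{implications} (again via Remark \ref{yoneda1}) translates $\nu^1_{>0}=0$ into the desired equality $E=A^!$.

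The corollary is thus essentially a repackaging of the machinery already assembled, so I do not expect a serious obstacle; the only points requiring care are the translation of ``$E$ finitely generated over $A^!$'' into ``$\nu^1$ vanishes in high homological degree'' (where one must check that bounded generator degrees genuinely force $\nu^1_{>r}=0$) and the routine bookkeeping of transferring the local statements to the graded case. The real mathematical content sits in Proposition \ref{gradedmu1}, which rests on the Avramov--Peeva theorem, and in Theorem \ref{mu}; the corollary simply combines these with the Yoneda-algebra dictionary.
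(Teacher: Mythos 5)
Your proof is correct and follows essentially the same route as the paper: the graded form of equivalence (6) in Theorem \ref{implications} converts finite generation into $\nu^1_{\gg 0}=0$, and Proposition \ref{gradedmu1} then yields that $A$ is Koszul. The only difference is the final step: the paper simply cites the known characterization of Koszul algebras (Koszul if and only if $\Ext_A(k,k)=A^!$), whereas you re-derive that implication internally via $\ld_A(k)=0$, Theorem \ref{mu}(c), and equivalence (5) --- a self-contained variant that works equally well.
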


\begin{proof}
 Use  Proposition \ref{gradedmu1} and the graded version of the equivalence (6) in the Theorem to conclude that $\Ext_A(k,k)=A^!$. This is a known characterization of Koszul algebras.
\end{proof}

We say that $R$ is complete intersection if $\widehat R\cong Q/(f_1, \cdots, f_c)$ with $(Q,\fn,k)$ a regular local ring and ${\bd f}=f_1, \dots, f_c$ is a regular sequence. Note that $c=\codim R$.

\begin{corollary}
Assume $R$ is complete intersection.  If $\Ext_R(k,k)$ is finitely generated over $R^!$, then  $\Ext_R(k,k)=R^{!}$. 
\end{corollary}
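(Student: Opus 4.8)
The plan is to reduce the finite-generation hypothesis to the single vanishing $\nu^1_2=0$ and then to combine this with the fact that, over a complete intersection, the Yoneda algebra is generated in low degrees.

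First I would reinterpret the hypothesis via Theorem \ref{implications}. The equivalence (6) identifies $\nu^1_{>r}=0$ with generation of $E$ over $R^!$ by elements of degree $\le r$; since the graded components $E^i$ are finite-dimensional, finite generation of $E$ over $R^!$ is therefore the same as $\nu^1_{\gg 0}=0$ (take $r$ to be the top degree of a finite homogeneous generating set). Applying Proposition \ref{13} to any even index $2n>r$ then yields $\nu^1_2=0$. By the equivalence (5) it now suffices to upgrade $\nu^1_2=0$ to $\nu^1_{>0}=0$, and this is where the complete intersection hypothesis enters.

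For the decisive step I would argue as follows. By Remark \ref{yoneda1} with $i=2$, the condition $\nu^1_2=0$ means precisely that the Yoneda multiplication $E^1\otimes E^1\to E^2$ is surjective, so that $E^2$ lies in the subalgebra $R^!$ generated by $E^1$. On the other hand, $R$ being a complete intersection is characterized by the vanishing $\pi_i(R)=0$ for all $i\ge 3$ of the $\Gamma$-indecomposables used in Lemma \ref{Tor2map}; equivalently, a minimal Tate resolution $R\langle\bd x\rangle$ of $k$ has all of its variables in degrees $1$ and $2$. Consequently $E=\Ext_R(k,k)$ is generated as a $k$-algebra by $E^1$ together with $E^2$. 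Since $E^2\subseteq R^!$ and $E^1$ generates $R^!$ by definition, the whole algebra $E$ is generated by $E^1$; that is, $E=R^!$, equivalently $\nu^1_{>0}=0$. This finishes the proof.

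The step I expect to be the main obstacle is the structural input that a complete intersection has its Yoneda algebra generated in degrees $1$ and $2$---equivalently $\pi_i(R)=0$ for $i\ge 3$, the characterization of complete intersections going back to Assmus and Gulliksen. The point is that over such a ring the degree-$2$ condition $\nu^1_2=0$ cannot be satisfied only partially: it forces all of the degree-$2$ algebra generators into $R^!$, and there are no generators in higher degrees left to obstruct the conclusion. For a ring that is not a complete intersection this fails, since $E$ may have indecomposables in arbitrarily high degrees, which is precisely why Question 3 remains open in general.
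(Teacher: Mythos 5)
Your proposal is correct, and its first half coincides with the paper's own argument: both you and the paper use equivalence (6) of Theorem \ref{implications} (Roos's result) to convert finite generation of $E$ over $R^!$ into $\nu^1_{>r}=0$ for some $r$, and then Proposition \ref{13} (implication (2)) to descend to $\nu^1_2=0$. The two arguments diverge at the final step. The paper passes through equivalence (7), translating $\nu^1_2=0$ into the presentation-theoretic condition $s(R)=1$, i.e.\ $\fa\cap\fn^3\subseteq\fn\fa$, and then cites \cite[5.3, 5.4(2)]{S}, where that condition is shown to be equivalent to $E=R^!$ when $R$ is a complete intersection; this makes every implication in the diagram of Theorem \ref{implications} reversible. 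You instead stay inside the Yoneda algebra: $\nu^1_2=0$ gives $E^2=E^1\cdot E^1\subseteq R^!$ by Remark \ref{yoneda1}, and you conclude from the structural fact that a complete intersection has Yoneda algebra generated in degrees $1$ and $2$. Your route is arguably more transparent about why complete intersections are special (no algebra generators of $E$ above degree $2$ remain to obstruct the conclusion), and it avoids reliance on \cite{S}, at the cost of invoking the classical structure theory. One step needs more justification than your ``consequently'': the passage from ``the minimal Tate resolution has variables only in degrees $1$ and $2$'' to ``$E$ is generated in degrees $\le 2$'' is not formal, since $\Tor^R_*(k,k)\cong\mathsf\Lambda\otimes\mathsf\Gamma$ is itself \emph{not} generated in degrees $\le 2$ in positive characteristic (the divided powers $y^{(n)}$ are indecomposable). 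What you actually need is that $E$ is the universal enveloping algebra of the homotopy Lie algebra whose graded pieces count the Tate variables (see \cite[10.2.1]{Avr98}, or Sj\"odin's presentation of the Ext algebra of a complete intersection), combined with the Assmus--Gulliksen vanishing $\pi_i(R)=0$ for $i\ge 3$; PBW then gives generation in degrees $\le 2$. Since these are true and classical theorems, this is a citation-level gap rather than a mathematical one, and your proof stands.
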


\begin{proof}
Assume $R$ is complete intersection. In this case,  the equality $s(R)=1$ is equivalent to $\Ext_R(k,k)=R^{!}$; this can be seen from \cite[5.3, 5.4(2)]{S}. In particular, all implications in the diagram in Theorem \ref{implications} are reversible. 
\end{proof}

We now record some consequences in terms of the linearity defect, which follow immediately from the results above and Theorem \ref{mu}.

\begin{corollary}
If $\ld_R(k)<\infty$, then $s(R)=1$. 
\end{corollary}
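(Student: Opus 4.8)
The plan is to chain together three results already established in the excerpt: Theorem \ref{mu} (which converts finiteness of the linearity defect into vanishing of the maps $\nu^1_i$ in high homological degree), Proposition \ref{13} (which propagates this vanishing downward to $\nu^1_2$), and Lemma \ref{s} (which identifies the condition $\nu^1_2=0$ with the numerical equality $s(R)=1$). Since each link is already proved, the corollary should follow by a short deduction with no new input.

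First I would invoke the hypothesis $\ld_R(k)<\infty$. Applying Theorem \ref{mu}(b) with $M=k$ and $d=\ld_R(k)$, I obtain $\nu^n_i=0$ for all $i>d$ and all $n\ge 0$; restricting to $n=1$ gives $\nu^1_i=0$ for every $i>d$. In particular, choosing any integer $n>0$ large enough that $2n>d$, I conclude $\nu^1_{2n}=0$. (Equivalently, one may simply observe that $\ld_R(k)<\infty$ yields $\nu^1_{\gg 0}=0$, so some even index $2n$ satisfies the hypothesis of Proposition \ref{13}.)

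Next I would feed this into Proposition \ref{13}: since $\nu^1_{2n}=0$ for some $n>0$, that proposition gives $\nu^1_2=0$. Finally, Lemma \ref{s} states that $\nu^1_2=0$ holds if and only if $s(R)=1$, so I conclude $s(R)=1$, completing the proof.

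There is no real obstacle here; the substantive content lives entirely in the cited results. The only point requiring a moment's care is ensuring that an even index $2n$ exceeding $d$ can always be chosen, which is immediate, and that Lemma \ref{s} is being applied in the correct direction (from the vanishing $\nu^1_2=0$ to the numerical conclusion $s(R)=1$).

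\begin{proof}
Set $d=\ld_R(k)$, which is finite by hypothesis. By Theorem \ref{mu}, we have $\nu^1_i=0$ for all $i>d$. Choose an integer $n>0$ with $2n>d$; then $\nu^1_{2n}=0$, and Proposition \ref{13} yields $\nu^1_2=0$. By Lemma \ref{s}, this is equivalent to $s(R)=1$.
\end{proof}
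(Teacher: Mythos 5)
Your proof is correct and follows exactly the chain the paper intends: Theorem \ref{mu} gives $\nu^1_{\gg 0}=0$, Proposition \ref{13} (implication (2) of Theorem \ref{implications}) drops this to $\nu^1_2=0$, and Lemma \ref{s} (equivalence (7)) converts that to $s(R)=1$. The paper leaves this as an immediate consequence of those results, so there is nothing to add.
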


\begin{corollary}
Assume that $\ld_R(k)<\infty$. If either $\ld_R(k)\le 2$ or $R$ is complete intersection, then $\Ext_R(k,k)=R^{!}$. 
\end{corollary}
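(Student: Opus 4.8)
The plan is to reduce the two cases to the two preceding corollaries, using Theorem~\ref{mu} as the sole bridge that converts the finiteness hypothesis on $\ld_R(k)$ into a vanishing statement for the maps $\nu^1$. Since both preceding corollaries already deliver the equality $E=R^{!}$ from a hypothesis phrased in terms of generation of $E$ over $R^{!}$, and since Theorem~\ref{implications} translates such generation statements back into vanishing of $\nu^1$, almost all the work has been done; the present corollary is essentially a packaging step.

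First I would dispose of the complete intersection case. The hypothesis $\ld_R(k)<\infty$ feeds directly into Theorem~\ref{mu}(b), which gives $\nu^n_i=0$ for all $i>\ld_R(k)$ and all $n\ge 0$; in particular $\nu^1_{\gg 0}=0$, i.e. $\nu^1_{>r}=0$ for some $r\ge 1$. By equivalence (6) of Theorem~\ref{implications}, $E=\Ext_R(k,k)$ is then generated over $R^{!}$ by elements of degree $r$, hence is finitely generated as a right $R^{!}$-module. Invoking the preceding corollary for complete intersection rings yields $E=R^{!}$.

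For the case $\ld_R(k)\le 2$ I would use the sharper bound. Theorem~\ref{mu}(b) now gives $\nu^n_i=0$ for all $i>2$ and all $n\ge 0$, so in particular $\nu^1_{>2}=0$. By equivalence (6) of Theorem~\ref{implications} with $r=2$, this says exactly that $E$ is generated over $R^{!}$ by its elements of degree $2$, and the preceding corollary on degree-$2$ generation then gives $E=R^{!}$. (Alternatively, one can bypass that corollary and argue inside the diagram: since $\nu^1_1=0$ always by Remark~\ref{mu_1}, implication (2)---which is Proposition~\ref{13}---forces $\nu^1_2=0$ from $\nu^1_{>2}=0$, and combining $\nu^1_1=0$, $\nu^1_2=0$, $\nu^1_{>2}=0$ gives $\nu^1_{>0}=0$, whence $E=R^{!}$ by equivalence (5).)

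I do not anticipate a genuine obstacle here, as the corollary is a corollary in the strict sense: every nontrivial input is already available. The only point requiring care is the bookkeeping of which hypothesis feeds which preceding result---the complete intersection branch uses merely $\ld_R(k)<\infty$ together with the complete intersection corollary, whereas the $\ld_R(k)\le 2$ branch genuinely uses the bound $2$ to land exactly in the degree-$2$ generation corollary---and confirming that the translation "$\nu^1_{\gg 0}=0$" $\Longleftrightarrow$ "$E$ finitely generated over $R^{!}$" is precisely the content supplied by equivalence (6) of Theorem~\ref{implications}.
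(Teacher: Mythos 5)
Your proposal is correct and matches the paper's intended argument: the paper records this corollary as an immediate consequence of Theorem~\ref{mu} (to convert $\ld_R(k)<\infty$, resp.\ $\ld_R(k)\le 2$, into $\nu^1_{\gg 0}=0$, resp.\ $\nu^1_{>2}=0$), equivalence (6) of Theorem~\ref{implications}, and the two preceding corollaries (degree-$2$ generation and the complete intersection case). Your bookkeeping, including the alternative in-diagram route via Proposition~\ref{13} and Remark~\ref{mu_1}, is exactly the paper's reasoning spelled out.
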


\section{Complete intersection rings}

In this section we answer positively Question 1 for complete intersection rings, under the additional assumption that $\agr R$ is Cohen-Macaulay. 

\begin{chunk}
Set $\dim R=d$. If $M$ is a finitely generated $R$-module of dimension $d$, recall, cf. \cite[4.1.8, 4.1.9]{BH}, that
\begin{equation}
\label{Hilbfrac}
\hilb Mt=\frac{Q_M(t)}{(1-t)^d}
\end{equation}
for some polynomial $Q_M(t)\in \mathbb Z[t]$ with $Q_M(1)\ne 0$. We define the {\it multiplicity} of $M$, denoted  $e(M)$, by setting $e(M)=Q_M(1)$. We set $e(M)=0$ whenever $\dim M<d$. With this definition, $e(-)$ is exact on short exact sequences, see \cite[4.6.7]{BH}.
\end{chunk}

\begin{lemma}
\label{e}
Let $R$ be a $d$-dimensional Cohen-Macaulay local ring. Assume that  $\Po^R_M(t)=A(t)/B(t)$ with $A(t)$, $B(t)$ relatively prime  polynomials in $\mathbb Z[t]$. 

If $\ld_R(M)<\infty$, then $B(-1)\ne 0$ and $e(M)/e(R)=A(-1)/B(-1)$. 
\end{lemma}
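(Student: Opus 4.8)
The plan is to reduce to a Koszul syzygy and then read everything off the Poincar\'e series. Put $s=\ld_R(M)$ and let $N=\syz{s}{M}$ be the $s$th syzygy in a minimal free resolution of $M$. Since $\ld_R(M)\le s$, the syzygy criterion recalled in \ref{ld} makes $N$ a Koszul module. Truncating a minimal resolution of $M$ gives $\beta_i^R(N)=\beta_{i+s}^R(M)$ for all $i\ge 0$, so setting $P(t)=\sum_{i=0}^{s-1}\beta_i^R(M)t^i\in\mathbb Z[t]$ we obtain $\Po_M^R(t)=t^s\Po_N^R(t)+P(t)$. Because $N$ is Koszul, \eqref{KoszulP} gives $\Po_N^R(t)=\hilb N{-t}/\hilb R{-t}$.

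Next I would clear denominators. Writing both Hilbert series over the common power $d=\dim R$, say $\hilb R t=Q_R(t)/(1-t)^d$ and $\hilb N t=Q_N(t)/(1-t)^d$ (permitting the numerator of $N$ to vanish at $t=1$ when $\dim N<d$), one has $Q_R(1)=e(R)$ and $Q_N(1)=e(N)$ by definition of multiplicity. Substituting $t\mapsto -t$ replaces each denominator by $(1+t)^d$, and these cancel, so $\Po_N^R(t)=Q_N(-t)/Q_R(-t)$. Consequently $\Po_M^R(t)=\widetilde A(t)/Q_R(-t)$ with $\widetilde A(t)=t^sQ_N(-t)+P(t)Q_R(-t)\in\mathbb Z[t]$.

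I would then evaluate at $t=-1$. Comparing $\widetilde A(t)/Q_R(-t)$ with the reduced fraction $A(t)/B(t)$ and using $\gcd(A,B)=1$ shows $B(t)\mid Q_R(-t)$ in $\mathbb Q[t]$. The value of $Q_R(-t)$ at $t=-1$ is $Q_R(1)=e(R)\ne 0$, so no divisor of $Q_R(-t)$ can vanish at $-1$; in particular $B(-1)\ne 0$. Since the two rational functions coincide and neither has a pole at $-1$, we get $A(-1)/B(-1)=\widetilde A(-1)/Q_R(1)$, and a direct substitution gives $\widetilde A(-1)=(-1)^sQ_N(1)+P(-1)Q_R(1)=(-1)^se(N)+P(-1)e(R)$.

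Finally I would match this numerator to $e(M)$. Decomposing the exact sequence $0\to N\to F_{s-1}\to\cdots\to F_0\to M\to 0$ into short exact sequences and using additivity of $e(-)$ together with $e(F_i)=\beta_i^R(M)e(R)$ yields $e(M)=e(R)P(-1)+(-1)^se(N)=\widetilde A(-1)$, whence $A(-1)/B(-1)=e(M)/e(R)$. The cases $s=0$ and $N=0$ are covered by the same formulas. The one delicate point is the reduction to lowest terms: the non-vanishing of $B$ at $-1$ must be deduced from the divisibility $B\mid Q_R(-t)$ and from $Q_R(1)=e(R)\ne 0$, and not from assuming $Q_R(-t)$ is already reduced; the rest is bookkeeping with Hilbert series and the additivity of multiplicity.
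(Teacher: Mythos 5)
Your proof is correct and follows essentially the same route as the paper's: truncate a minimal resolution at a Koszul syzygy, apply \eqref{KoszulP} and \eqref{Hilbfrac} to write $\Po_M^R(t)$ with denominator $Q_R(-t)$, then evaluate at $t=-1$ and compare with the additivity of $e(-)$ on the truncated exact sequence. The only differences are cosmetic refinements: the paper picks a syzygy of depth $\ge\depth R$ to guarantee full dimension where you instead invoke the convention $e(N)=0$ when $\dim N<d$, and you make explicit the divisibility argument for $B(-1)\ne 0$ that the paper leaves implicit.
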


\begin{proof}
Set $\beta_i=\beta_i^R(M)$. Let $K$ be a Koszul syzygy of $M$ so that $\depth K\ge \depth R$. Since $R$ is Cohen-Macaulay, we have that $\dim K=d$. 

We have an exact sequence: 
\begin{equation}
\label{syzygy}
0\to K\to R^{\beta_n}\to R^{\beta_{n-1}}\to\cdots\to R^{\beta_1}\to R^{\beta_0}\to M\to 0\,.
\end{equation}
Since $K$ is Koszul, \eqref{KoszulP} gives
\begin{equation}
\label{PoK}
\Po^R_K(t)=\frac{\hilb{ K}{-t}}{\hilb{R}{-t}}\,.
\end{equation}
As recalled in \eqref{Hilbfrac}, we also have 
\begin{equation}
\label{HilbKR}
\hilb{K}t=\frac{Q_K(t)}{(1-t)^d}\quad \text{and} \quad \hilb{R}t=\frac{Q_R(t)}{(1-t)^d} 
\end{equation}
for $Q_R(t), Q_K(t)\in \mathbb Z[t]$ with  $Q_K(1)=e(K)$ and $Q_R(1)=e(R)$. 

The hypothesis and the choice of $K$ give that 
\begin{equation}
\label{AB}
\frac{A(t)}{B(t)}=\Po^R_M(t)=\beta_0+\beta_1t+\cdots+\beta_nt^n+\Po^R_K(t)\cdot t^{n+1}\,.
\end{equation}

plugging in \eqref{PoK} and \eqref{HilbKR} into \eqref{AB}, we have: 
\begin{equation}
\label{AB'}
\frac{A(t)}{B(t)}=\beta_0+\beta_1t+\cdots+\beta_nt^n+\frac{Q_K(-t)}{Q_R(-t)}\cdot t^{n+1}\,.
\end{equation}

A multiplicity count in the exact sequence \eqref{syzygy} gives: 
\begin{equation}
\label{mcount}
(\beta_0-\beta_1+\cdots+(-1)^n\beta_n)e(R)=e(M)-(-1)^{n+1}e(K)
\end{equation}

We then plug in $t=-1$ into \eqref{AB'}. Note  that  $Q_K(1)=e(K)$ and $Q_R(1)=e(R)$. We then  use \eqref{mcount} in order to conclude:

\begin{align*}
\frac{A(-1)}{B(-1)}&=\frac{e(M)-(-1)^{n+1}e(K)}{e(R)}+\frac{Q_K(1)}{Q_R(1)}\cdot (-1)^{n+1}\\
&=\frac{e(M)-(-1)^{n+1}e(K)}{e(R)}+\frac{e(K)}{e(R)}\cdot (-1)^{n+1}=\frac{e(M)}{e(R)}
\end{align*}
\end{proof}

A sequence ${\bd x}=x_1, \dots, x_m$ is said to be {\it strictly regular} if the initial forms $x_1^*, \dots, x_m^*$ form a regular sequence in $\agr R$. A strictly regular sequence is, in particular, a regular sequence in $R$.

\begin{lemma}
\label{reduce}
If ${\bd x}\in\fm\smallsetminus \fm^2$ is a strictly regular sequence on $R$, then $\ld_R(k)=\ld_{R/({\bd x})}(k)$. 
\end{lemma}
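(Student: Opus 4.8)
The plan is to reduce to the case of a single element and then compare minimal resolutions of $k$ over $R$ and over $\ov R=R/(x)$ after passing to associated graded rings.

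First I would reduce to a sequence of length one by induction on the length of $\bd x=x_1,\dots,x_m$. The element $x_1$ is strictly regular on $R$, and a standard consequence of strict regularity is that $\agr{R/(x_1)}\cong\agr R/(x_1^*)$, with $x_1^*$ a nonzerodivisor of degree $1$. The initial forms of $x_2,\dots,x_m$ in this quotient are the images of $x_2^*,\dots,x_m^*$; since $x_1^*,\dots,x_m^*$ is a regular sequence in $\agr R$, their images form a regular sequence of nonzero degree-one elements in $\agr R/(x_1^*)$. Hence $x_2,\dots,x_m$ is a strictly regular sequence of elements in the maximal ideal of $R/(x_1)$ but not in its square, and induction reduces matters to a single strictly regular element $x\in\fm\smallsetminus\fm^2$.

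For such an $x$, set $\ov R=R/(x)$. Since $x$ is a nonzerodivisor, the Koszul complex $K=R\langle T\rangle$ on $x$ (with $|T|=1$, $\partial T=x$) is a minimal free resolution of $\ov R$ over $R$. Let $G$ be a minimal free resolution of $k$ over $\ov R$ and let $\wt G$ be an $R$-free lift of $G$. Composing the two resolutions yields a free resolution $F$ of $k$ over $R$ whose underlying module is $K\otimes_R\wt G$; it is minimal because all differentials have entries in $\fm$, so I may use $F$ to compute $\ld_R(k)$. The crux is then to pass to associated graded: using that $x$ is strictly regular (equivalently, that $x^*$ is a nonzerodivisor in $\agr R$), I would show that the $\fm$-adic filtration on $F$ is strict and splits as a tensor product of filtrations, so that
$$
\lin^{R}(F)\;\cong\;\mathcal K\otimes_{\agr R}\agr{\wt G},
$$
where $\mathcal K\colon 0\to\agr R(-1)\xrightarrow{x^*}\agr R\to 0$ is the associated graded of $K$ — the linear Koszul resolution of $\agr{\ov R}=\agr R/(x^*)$ over $\agr R$ — and $\agr{\wt G}$ is a complex of free $\agr R$-modules with linear differentials satisfying $\agr{\wt G}\otimes_{\agr R}\agr{\ov R}\cong\lin^{\ov R}(G)$. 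Verifying this strictness, which is exactly where the strict regularity hypothesis enters, is the main obstacle; once it is established the remainder is formal.

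Finally, since the terms of $\agr{\wt G}$ are free $\agr R$-modules and $\mathcal K$ is a free resolution of $\agr{\ov R}$, the higher groups $\Tor^{\agr R}_{>0}(\agr{\ov R},\agr{\wt G}_j)$ vanish, so the augmentation $\mathcal K\to\agr{\ov R}$ induces a quasi-isomorphism $\mathcal K\otimes_{\agr R}\agr{\wt G}\to\agr{\ov R}\otimes_{\agr R}\agr{\wt G}$. Combining this with the previous paragraph gives
$$
\HH_i(\lin^{R}(F))\;\cong\;\HH_i\bigl(\agr{\ov R}\otimes_{\agr R}\agr{\wt G}\bigr)\;\cong\;\HH_i(\lin^{\ov R}(G))\qquad\text{for all }i.
$$
Taking the supremum over the indices $i$ with nonvanishing homology yields $\ld_R(k)=\ld_{\ov R}(k)$, which completes the single-element case and hence, by the reduction above, the proof.
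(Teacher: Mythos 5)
Your reduction to a single strictly regular element $x$ is correct and matches the paper's first step. The trouble is everything after that. Your argument funnels through the claimed isomorphism $\lin^{R}(F)\cong\mathcal K\otimes_{\agr R}\agr{\wt G}$, which you yourself defer (``the main obstacle''), and this claim is defective as stated, not merely unverified. The root of the problem is that $\wt G$ is not a complex: lifting the differentials of $G$ to $R$ only gives $\wt\partial^{\,2}\equiv 0\bmod{(x)}$, hence $\wt\partial^{\,2}=xs$ for a uniquely determined system of maps $s\colon \wt G_i\to\wt G_{i-2}$ (unique since $x$ is a nonzerodivisor). So ``composing the two resolutions'' is not an operation; the differential on the graded module $K\otimes_R\wt G$ that makes it an acyclic minimal complex necessarily involves $s$, schematically $d=\begin{pmatrix}\wt\partial & x\\ -s & -\wt\partial\end{pmatrix}$, and your minimality claim silently uses that the entries of $s$ lie in $\fm$ (true, because $x\notin\fm^2$ forces $(\fm^2\col x)\subseteq\fm$). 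For the same reason ``$\agr{\wt G}$'' is not a well-defined complex: the linear parts of the lifted differentials need not compose to zero.

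Consequently the differential of $\lin^{R}(F)$ contains, besides the two ``tensor'' blocks (the linear part $\delta$ of $\wt\partial$, and $x^*$), a third block: the linear part $\sigma$ of $s$. A genuine tensor product $\mathcal K\otimes_{\agr R}\agr{\wt G}$ of complexes has no such block, so your decomposition can hold only if $\sigma=0$, i.e.\ only if the entries of $s$ lie in $\fm^2$; by strict regularity ($(\fm^3\col x)\subseteq\fm^2$) this amounts to choosing the lift so that the entries of $\wt\partial^{\,2}$ lie in $\fm^3$, which is not automatic (entries of $\wt\partial^{\,2}$ lie only in $(x)\cap\fm^2$, which is strictly larger than $x\fm^2$) and for which you give no argument. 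This is not a formality one can defer: if $\sigma\neq0$, then modulo $x^*$ your complex is, up to shifts and signs, the mapping cone of the chain map of $\lin^{\ov R}(G)$ induced by $\sigma$, not a direct sum of two shifted copies of $\lin^{\ov R}(G)$, and your final quasi-isomorphism step (which requires $\agr{\wt G}$ to be an honest complex of flat $\agr R$-modules) collapses. The hypothesis of strict regularity has to do real work exactly here, and your proposal never makes it do so. Note that the paper avoids this entire difficulty: by Theorem \ref{mu}, $\ld_R(k)$ is controlled by the vanishing of the maps $\nu^n_i$; strict regularity gives $(\fm^{n+1}\col x)\subseteq\fm^n$ for all $n$, and the cited result \cite[8.7]{S} transfers that vanishing between $R$ and $R/(x)$. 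To salvage your route you would need to either prove that a lift with the entries of $s$ in $\fm^2$ exists, or analyze the $\sigma$-twisted complex directly; as written, the proof has a gap at its central step.
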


\begin{proof}
We may assume that ${\bd x}$ consists of a single element $x$. Since $x$ is strictly regular, we have $(\fm^{n+1}\col x)\subseteq \fm^n$ for all $n$. Then apply \cite[8.7]{S} together with Theorem \ref{mu}.  
\end{proof}

\begin{remark}
\label{artinian1}
Let $R$ be a ring of dimension $d$ such that $\agr R$ is Cohen-Macaulay.  Consider the ring $R'=R[t]_{\fm[t]}$. The residue field of $R'$ is infinite and the natural map $R\to R'$ is faithfully flat. We can then choose a strictly regular sequence ${\bd g}=g_1, \dots, g_d$ such that the length of 
$R''=R'/({\bd g})$ is equal to $e(R)$. Note that $\edim R''=\codim R$.

Since the  map  of local rings $\varphi\colon (R,\fm,k)\to (R', \fm',k')$ is faithfully flat, with $\varphi(\fm)=\fm'$,  we have that $\ld_R(k)=\ld_{R'}(k)$. Furthermore, Lemma \ref{reduce} yields that $\ld_R(k)=\ld_{R''}(k)$. 
\end{remark}

If $R$ is complete intersection of codimension $c$,  then it is known $e(R)\ge 2^c$, cf.\! \cite[\S 7, Proposition 7]{B}.  We say that $R$ has {\it minimal multiplicity} if $e(R)=2^c$. 
\begin{theorem}
\label{ci}
If $R$ is a  complete intersection, then the following statements are equivalent: 
\begin{enumerate}[\quad\rm(a)] 
\item $\ld_R(k)=0$;
\item $R$ has minimal multiplicity. 
\end{enumerate}
Furthermore, if $\agr R$ is Cohen-Macaulay, then they are also equivalent to
\begin{enumerate}[\quad\rm(c)]
\item $\ld_R(k)<\infty$.
\end{enumerate}
\end{theorem}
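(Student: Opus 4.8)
The plan is to prove a cycle of implications so that (a) and (b) are always equivalent and (c) joins them once $\agr R$ is Cohen-Macaulay. Concretely, I would establish $\text{(a)}\Rightarrow\text{(b)}$ and $\text{(b)}\Rightarrow\text{(a)}$ directly, with no Cohen-Macaulay hypothesis; note that $\text{(a)}\Rightarrow\text{(c)}$ is trivial; and finally prove $\text{(c)}\Rightarrow\text{(b)}$ under the extra assumption that $\agr R$ is Cohen-Macaulay. Together these give $\text{(a)}\Leftrightarrow\text{(b)}$ in general, and $\text{(c)}\Rightarrow\text{(b)}\Rightarrow\text{(a)}\Rightarrow\text{(c)}$ once $\agr R$ is Cohen-Macaulay. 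Throughout I would use the classical Tate formula $\Po^R_k(t)=(1+t)^{\edim R}/(1-t^2)^{\codim R}$ for a complete intersection, which simplifies to $\Po^R_k(t)=(1+t)^{d}/(1-t)^{c}$, where $d=\dim R$ and $c=\codim R$, since $\edim R=c+d$.

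For $\text{(a)}\Rightarrow\text{(b)}$ I would argue via Hilbert series. If $\ld_R(k)=0$ then $k$ is Koszul, so \eqref{KoszulP} gives $\Po^R_k(t)=1/\hilb R{-t}$. Combining this with the Tate formula yields $\hilb R{-t}=(1-t)^c/(1+t)^d$, and substituting $t\mapsto -t$ gives $\hilb R t=(1+t)^c/(1-t)^d$. Reading off the numerator $Q_R(t)=(1+t)^c$ in the presentation $\hilb R t=Q_R(t)/(1-t)^d$, I obtain $e(R)=Q_R(1)=2^c$, which is exactly (b).

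For $\text{(c)}\Rightarrow\text{(b)}$, assume $\agr R$ is Cohen-Macaulay and $\ld_R(k)<\infty$. By Remark \ref{artinian1} I may pass to an Artinian ring $R''$ with $\ell(R'')=e(R)$, $\edim R''=c$, and $\ld_{R''}(k)=\ld_R(k)<\infty$; one checks (routinely) that reduction modulo a strictly regular sequence of order-one elements keeps a complete intersection a complete intersection, so $R''$ is a zero-dimensional complete intersection with $\edim R''=\codim R''=c$. The Tate formula then gives $\Po^{R''}_k(t)=(1+t)^c/(1-t^2)^c=1/(1-t)^c$, so in lowest terms $A(t)=1$ and $B(t)=(1-t)^c$. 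Applying Lemma \ref{e} with $M=k$ over the Artinian (hence Cohen-Macaulay) ring $R''$ gives $e(k)/e(R'')=A(-1)/B(-1)=1/2^c$; since $\dim R''=0$ one has $e(k)=1$ and $e(R'')=\ell(R'')=e(R)$, whence $e(R)=2^c$.

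The main obstacle is $\text{(b)}\Rightarrow\text{(a)}$, the one implication that is not simply Hilbert-series bookkeeping. Here I would show that minimal multiplicity forces $\agr R$ to be a complete intersection of quadrics, which is a Koszul algebra, so that $\ld_R(k)=0$. The mechanism is that the bound $e(R)\ge 2^c$ of \cite{B} becomes an equality precisely when a minimal generating set $f_1,\dots,f_c$ of the defining ideal $\fa$ can be chosen whose initial forms $f_1^*,\dots,f_c^*$ are quadrics forming a regular sequence in the polynomial ring $\agr Q$: in that case $\fa^*=(f_1^*,\dots,f_c^*)$ and $\agr R=\agr Q/(f_1^*,\dots,f_c^*)$ is a quadratic complete intersection, which is classically Koszul, so $R$ is Koszul. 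The delicate point is the structural claim itself, to be extracted from a Hilbert-series comparison along the surjection $\agr Q/(f_1^*,\dots,f_c^*)\twoheadrightarrow\agr R$ (ruling out that some $f_i^*$ has order greater than $2$, or that the initial forms fail to be a regular sequence, without the increase in multiplicity that would violate $e(R)=2^c$). I expect this to be the heart of the argument, and it may be cleanest to invoke the known equivalence between minimal multiplicity and being a quadratic (strict) complete intersection rather than to reprove it.
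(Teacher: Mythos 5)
Your proposal is correct and follows essentially the same route as the paper: the implication (a)$\Rightarrow$(b) by comparing the Koszul formula \eqref{KoszulP} with Tate's formula \eqref{Pok}, the implication (b)$\Rightarrow$(a) by invoking the known fact that a complete intersection of minimal multiplicity has associated graded ring a complete intersection of quadrics (which the paper likewise asserts rather than reproves), and (c)$\Rightarrow$(b) by reducing to the Artinian case via Remark \ref{artinian1} and applying Lemma \ref{e} with $M=k$. Your extra care in noting that $R''$ remains a complete intersection of codimension $c$, and that $A(t)=1$, $B(t)=(1-t)^c$ are relatively prime, fills in details the paper leaves implicit but does not change the argument.
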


\begin{remark}
If $R$ is a $d$-dimensional complete intersection ring of embedding dimension $e$ and codimension $c$, then $d=e-c$ and a result of Tate and Zariski, see for example  \cite[3.4.3]{GL},  gives 
\begin{equation}
\label{Pok}
\Po^R_k(t)=\frac{(1+t)^e}{(1-t^2)^c}=\frac{(1+t)^d}{(1-t)^c}\,.
\end{equation}
\end{remark}

\begin{proof}
${\rm (a)}\implies {\rm (b)}$: If $\ld_R(k)=0$, then $\agr R$ is a Koszul ring. We use then \eqref{KoszulP} and \eqref{Hilbfrac} to conclude 
$$
\Po^R_k(t)=\frac{1}{\hilb R{-t}}=\frac{(1+t)^d}{Q_R(-t)}\,.
$$
Comparing this  with the formula \eqref{Pok}, it  follows that $Q_R(t)=(1+t)^c$, hence $e(R)=Q_R(1)=2^c$. 

${\rm (b)}\implies{\rm (a)}$: If $R$ is a complete intersection of minimal multiplicity, then $\agr R$ is a complete interesection of quadrics, hence $\agr R$ is Koszul. 

Obviously, ${\rm (a)}\implies{\rm (c)}$. 

Assume now that $\agr R$ is Cohen-Macaulay. 

${\rm (c)}\implies{\rm (b)}$: Note that we may replace $R$ with the ring $R''$ of Remark \ref{artinian1}. We may assume thus that $d=0$.

Then \eqref{Pok} gives $\Po^R_k(t)=\frac{1}{(1-t)^c}$. Using Lemma \ref{e} with $M=k$,  we conclude $e(R)=2^c$, hence $R$ has minimal multiplicity. 
\end{proof}

\section{Artinian and Golod rings}

In this section we use a result of Martsinkovsky  to provide more evidence for Question 2  in the case of Artinian rings (Theorem \ref{artinian}). We also settle Question 1 in the case of Golod rings $R$ with $\agr R$ Cohen-Macaulay, using results of Avramov and Levin (Theorem \ref{golod}).  

\begin{theorem}
\label{artinian}
Assume $R$ is Artinian with $\fm^{n+1}=0$. If $\nu_{\gg 0}^{n-1}=0$, then $\nu_{>0}^{n-1}=0$.
\end{theorem}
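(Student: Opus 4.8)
The plan is to transport everything to the Yoneda algebra $E=\Ext_R(k,k)$, where the vanishing of the maps $\nu^{n-1}_i$ becomes the vanishing of a homomorphism of graded $E$-modules, and then to rule out the only obstruction to descent by means of Martsinkovsky's result. First I would dispose of the degenerate cases: if $n\le 1$ then $\nu^{n-1}=\nu^0$, which vanishes identically, and if $\fm^n=0$ then $R/\fm^n=R$ forces $\nu^{n-1}_i=0$ for all $i>0$; so I may assume $n\ge 2$ and $\fm^n\ne 0$. By Remark \ref{flat} I may also assume $R$ is complete. Dualizing over $k$ and using the natural isomorphism $\Tor^R_i(k,L)^{\vee}\cong\Ext^i_R(L,k)$, the map $\nu^{n-1}_i$ vanishes if and only if the map
\[
\phi_i\colon \Ext^i_R(R/\fm^{n-1},k)\to\Ext^i_R(R/\fm^n,k)
\]
induced by the surjection $R/\fm^n\to R/\fm^{n-1}$ vanishes; this is the same passage from $\Tor$ to $\Ext$ that underlies Remark \ref{yoneda1}.

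The maps $\phi_i$ assemble into a homomorphism $\phi$ of graded modules over $E$, so that $\Ima\phi$ is a graded $E$-submodule of $\Ext_R(R/\fm^n,k)$. The hypothesis $\nu^{n-1}_{\gg 0}=0$ says exactly that $(\Ima\phi)_i=0$ for $i\gg 0$; since each graded piece is finite-dimensional, $\Ima\phi$ is then concentrated in finitely many degrees, while the desired conclusion $\nu^{n-1}_{>0}=0$ is the assertion that $(\Ima\phi)_i=0$ for all $i>0$. If this failed, the top nonzero degree $d$ of $\Ima\phi$ would be positive, and any nonzero $\xi\in(\Ima\phi)_d$ would satisfy $E_{\ges 1}\cdot\xi=0$; that is, $\xi$ would be a nonzero positive-degree element in the $E_{\ges 1}$-socle of $\Ext_R(R/\fm^n,k)$. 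The statement therefore reduces to showing that $\Ext_R(R/\fm^n,k)$ has no nonzero socle in positive degree, equivalently that every nonzero homogeneous class of positive degree can be pushed to strictly higher degree by Yoneda multiplication with some element of $E_{\ges 1}$. This propagation is precisely what I would extract from Martsinkovsky's result, applied to the module $R/\fm^n$, whose homological behaviour is controlled by $\fm^{n+1}=0$: here $\fm^n\subseteq\operatorname{soc}R$, and the sequence $0\to\fm^{n-1}/\fm^n\to R/\fm^n\to R/\fm^{n-1}\to 0$ has $k$-vector-space kernel. Iterating the propagation produces nonzero classes of arbitrarily large degree, contradicting the finiteness of $\Ima\phi$ unless $(\Ima\phi)_{>0}=0$.

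The crux, and the step I expect to be hardest, is this socle-freeness of $\Ext_R(R/\fm^n,k)$ in positive degree, i.e.\ the faithful propagation supplied by Martsinkovsky's theorem; matching its hypotheses to the present situation $\fm^{n+1}=0$ is the delicate point. Concretely, via \ref{interpret} a failure of $\nu^{n-1}_i=0$ is witnessed by an element $x\in F_i$ with $\partial x\in\fm^nF_{i-1}$ admitting no $u\in\fm^{n-1}F_i$ with $\partial x=\partial u$, and the propagation must manufacture such a witness in a strictly larger homological degree, for instance by multiplying $x$ by a suitable positive-degree cycle in a minimal Tate resolution of $k$, while checking that the product neither becomes a boundary nor falls into $\fm^{n-1}F$. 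Guaranteeing that a genuine defect survives this operation is exactly the content I would borrow from Martsinkovsky's result; the surrounding reductions, the $k$-duality identifications, and the sign bookkeeping should then be routine.
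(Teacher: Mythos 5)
Your formal reduction is sound, and it is essentially a repackaging of the first part of the paper's argument: dualizing over $k$ turns $\nu^{n-1}_i$ into the map $\phi_i\colon\Ext^i_R(R/\fm^{n-1},k)\to\Ext^i_R(R/\fm^n,k)$, which is linear over $E=\Ext_R(k,k)$, so $\Ima\phi$ is a graded $E$-submodule that the hypothesis confines to finitely many degrees; a nonzero element $\xi$ of top positive degree then satisfies $E^{\ge 1}\xi=0$. This top-degree trick is even a little tidier than the paper's proof, which fixes the degree $j$ of a nonzero image element and lets the degree $i$ of the Yoneda multiplier grow, concluding only that the element is annihilated by all elements of sufficiently large degree.

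The gap is precisely in the step you flag as the crux. You assert that the absence of positive-degree socle in $\Ext_R(R/\fm^n,k)$ is ``precisely'' Martsinkovsky's result ``applied to the module $R/\fm^n$''. But \cite[Theorem 6]{M} (more precisely, its proof) is a statement about the residue field: for a nonregular local ring, no nonzero element of $E$ is annihilated by all elements of $E$ of sufficiently large degree. It says nothing about $\Ext_R(M,k)$ for an arbitrary finitely generated $M$, and $R/\fm^n$ is not a (co)syzygy of $k$. What makes the result applicable here --- and what your proposal never establishes --- is the bridge furnished by the hypothesis $\fm^{n+1}=0$: since $\fm\cdot\fm^n=0$, the ideal $\fm^n$ is a $k$-vector space, so $\Ext_R(\fm^n,k)\cong E\otimes_k\Hom_k(\fm^n,k)$ is a \emph{free} graded left $E$-module; moreover the connecting maps of $0\to\fm^n\to R\to R/\fm^n\to 0$ give $E$-linear (up to sign) isomorphisms $\Ext^{i-1}_R(\fm^n,k)\cong\Ext^{i}_R(R/\fm^n,k)$ for $i\ge 2$, because $\Ext^{\ge 1}_R(R,k)=0$; and your $\xi$ does sit in degree $\ge 2$, since $\nu^{n-1}_1=0$ by Remark \ref{mu_1} forces $(\Ima\phi)_1=0$. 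Transporting $\xi$ through these identifications and projecting $\Hom_k(\fm^n,k)\to k$ so that its image stays nonzero produces a nonzero element of $E$ itself killed by $E^{\ge 1}$, and only at that point does Martsinkovsky yield a contradiction. This transfer is exactly the content of the commutative diagram at the heart of the paper's proof; your proposed substitute --- multiplying a chain-level witness in a Tate resolution by cycles and checking that the product is neither a boundary nor in $\fm^{n-1}F$ --- is not carried out, and filling it in would amount to reproving the diagram argument. So the skeleton is correct, but the decisive step is both missing and misattributed.
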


\begin{proof}
For integers  $i$, $s$ denote 
$$\gamma_i^s\colon \Ext^{i}_R(\fm^{s-1},k)\to\Ext^{i}_R(\fm^{s},k)
$$
the map induced in cohomology by the inclusion $\fm^{s}\hookrightarrow\fm^{s-1}$. 

If $i>1$ and $s>0$, note that $\nu^s_i=0$ if and only if $\Hom_R(\nu^s_i,k)=0$. Using the natural isomorphisms $\Ext^i_R(R/\fm^{a},k)\cong \Ext^{i-1}_R(\fm^a,k)$ for $a=s,s+1$, we conclude that  $\nu^s_i=0$ if and only if $\gamma_{i-1}^{s+1}=0$.

Let $j>0$ and assume that $i$ large enough so that $\nu_{i+j+1}^{n-1}=0$, thus $\gamma_{i+j}^{n}=0$. 

Consider now the commutative diagram below, where where $\alpha=E^i\otimes \gamma^n_j$ and $\beta=\gamma^n_{i+j}=0$, the vertical arrows are given by the Yoneda product, and the isomorphisms are due to the fact that $\fm^n$ is a $k$-vector space, since $\fm^{n+1}=0$. The  rightmost square is induced by choosing a certain  projection $\Hom(\fm^n,k)\to k$; the choice of this projection will be discussed later. 
\[
\xymatrixrowsep{1.7pc}
\xymatrixcolsep{1.0pc}
\xymatrix{
E^i\otimes\Ext_R^j(\fm^{n-1},k)\ar@{->}[r]^{\alpha}\ar@{->}[d]&E^i\otimes \Ext_R^j(\fm^n,k)\ar@{->}[d]\ar@{->}[r]^{\qquad\qquad\cong\qquad\qquad\quad}& E^i\otimes E^j\otimes\Hom(\fm^n,k)\ar@{->}[d]\ar@{->}[r]& E^i\otimes E^j\ar@{->}[d]\\
\Ext_R^{i+j}(\fm^{n-1},k)\ar@{->}[r]^{\beta=0}&\Ext_R^{i+j}(\fm^n,k)\ar@{->}[r]^{\cong}&E^{i+j}\otimes\Hom(\fm^n,k) \ar@{->}[r]&E^{i+j}}
\]
Assume that $\nu_{j+1}^{n-1}\ne 0$, or, equivalently, that $\gamma^{n}_j\ne 0$. There exists thus an element in $\Ext_R^j(\fm^{n-1},k)$ whose image $\theta$ in $\Ext_R^j(\fm^n,k)$ under $\gamma^n_j$  is non-zero. The commutativity of the left  square yields that the Yoneda product $\varphi\theta$ is zero for all $\varphi\in E^i$. 

Let $\widetilde \theta$ denote the image of $\theta$ in $E^j\otimes\Hom(\fm^n,k)$. The commutativity of the middle square shows that 
$\varphi\widetilde \theta=0$ for all $\varphi\in E^i$. Since $\widetilde \theta\ne 0$, we can now construct the right square in the diagram by choosing a projection $\Hom(\fm^n,k)\to k$ in such a way that the image of $\widetilde \theta$ under the induced map $E^j\otimes \Hom(\fm^n,k)\to E^j\otimes k\xrightarrow{\cong} E^j$ remains nonzero. Let $\ov\theta$ denote this image. 

 The commutativity of the right square shows then that $\varphi\ov\theta=0$ for all $\varphi\in E^i$. The element $\ov\theta$ of $E$ is thus annihilated by all elements of $E$ of sufficiently large degree. This is a contradiction, according to the proof of \cite[Theorem 6]{M}.

The contradiction shows that $\nu_{j+1}^{n-1}=0$ for all $j>0$. Recalling that $\nu_1^{n-1}=0$ by \ref{mu_1}, we conclude that $\nu_{>0}^{n-1}=0$. 
\end{proof}

If $R$ is a Cohen-Macaulay ring, then one has an inequality $\codim R\le e(R)-1$. If equality holds, we say that $R$ has {\it minimal multiplicity} (as a Cohen-Macaulay ring). We talked earlier about minimal multiplicity for complete intersections, and one should distinguish between the two notions.  In particular,  note that a complete intersection ring $R$ has minimal multiplicity as a Cohen-Macaulay ring only when $\codim R\le 1$. 

Besides complete intersections, Golod rings constitute another major class of rings for which the homological behavior of modules is fairly well understood. Since the definition of such rings is somewhat technical, we refer to \cite[\S 5]{Avr98}  for the definition and properties. 

\begin{theorem}
\label{golod}
If $R$ is Golod and $\agr R$ is Cohen-Macaulay, then the following statements are equivalent: 
\begin{enumerate}[\quad\rm(a)]
\item $\ld_R(k)=0$;
\item $\ld_R(k)<\infty$;
\item $R$ has minimal multiplicity. 
\end{enumerate} 
\end{theorem}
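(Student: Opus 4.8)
The plan is to prove the implications ${\rm (a)}\implies{\rm (b)}\implies{\rm (c)}\implies{\rm (a)}$, where ${\rm (a)}\implies{\rm (b)}$ is trivial and the Golod hypothesis enters only in ${\rm (b)}\implies{\rm (c)}$. In every step I first reduce to the Artinian case using Remark \ref{artinian1}: since $\agr R$ is Cohen-Macaulay I pass to a faithfully flat extension with infinite residue field and mod out a strictly regular sequence $\bd g=g_1,\dots,g_d$ (with $d=\dim R$), obtaining an Artinian ring $R''$ with $\edim R''=\codim R=:c$, with $\ell:=\operatorname{length}(R'')=e(R)$, and with $\ld_{R''}(k)=\ld_R(k)$ by Lemma \ref{reduce}.

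For ${\rm (c)}\implies{\rm (a)}$, which uses neither Golodness nor anything beyond the reduction, observe that minimal multiplicity $e(R)=\codim R+1$ becomes $\ell=\edim R''+1$ after reduction; since $R''$ is Artinian this forces $(\fm'')^2=0$. Then $\agr{R''}$ is a standard graded algebra with $(\agr{R''})_{\ge 2}=0$, whose defining ideal is generated in degree $2$, hence it is Koszul; thus $\ld_{R''}(k)=0$ and therefore $\ld_R(k)=0$. The implication ${\rm (a)}\implies{\rm (b)}$ holds by definition.

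The heart of the matter is ${\rm (b)}\implies{\rm (c)}$. Because $R$ is Golod, its Poincar\'e series is $\Po^R_k(t)=(1+t)^{e}/D(t)$ with $e=\edim R$ and $D(t)=1-\sum_{j\ge 1}\dim_k\HH_j(K^R)\,t^{j+1}$, where $K^R$ is the Koszul complex on a minimal generating set of $\fm$; see \cite{Avr98}. As $R$ is Cohen-Macaulay of codimension $c$ one has $\HH_j(K^R)=0$ for $j>c$, so $\deg D=c+1$, while $D(0)=1$ and $D$ has \emph{no linear term}. The change-of-rings formula $\Po^{R/(x)}_k(t)=\Po^R_k(t)/(1+t)$ for a non-zerodivisor $x\in\fm\smallsetminus\fm^2$, applied $d$ times along $\bd g$, gives $\Po^{R''}_k(t)=(1+t)^{c}/D(t)$ with the same $D$. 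Now I apply Lemma \ref{e} to $R''$ with $M=k$: writing $\Po^{R''}_k=A/B$ in lowest terms, the hypothesis $\ld_{R''}(k)<\infty$ gives $B(-1)\ne 0$ and $A(-1)/B(-1)=e(k)/e(R'')=1/\ell\ne 0$. Since the only possible common factor of $(1+t)^{c}$ and $D(t)$ is a power of $(1+t)$, the non-vanishing of $A(-1)$ forces $(1+t)^{c}\mid D(t)$; combined with $\deg D=c+1$ and $D(0)=1$ this yields $D(t)=(1+t)^{c}(1+bt)$ for some $b$. From $B(-1)=1-b=\ell$ together with the vanishing of the coefficient of $t^1$ in $D$ (which reads $c+b=0$) one obtains $b=-c$ and hence $\ell=c+1$, i.e.\ $R$ has minimal multiplicity.

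The main obstacle is the reduction step, which must simultaneously preserve the linearity defect (Lemma \ref{reduce}), preserve the multiplicity ($e(R'')=\ell=e(R)$, as arranged in Remark \ref{artinian1}), and transport the Golod Poincar\'e series correctly through the factor $(1+t)^{d}$; equivalently one may invoke the Avramov--Levin results to see directly that $R''$ is again Golod, in which case the same structural facts ($\deg D=c+1$, $D(0)=1$, no linear term) hold for its own denominator and the argument is unchanged. The other delicate point is the bookkeeping in Lemma \ref{e}: one must check that passing to lowest terms can only cancel powers of $(1+t)$, so that the inequality $e(k)\ne 0$ — available precisely because the reduction makes the ring Artinian — forces the entire factor $(1+t)^{c}$ to cancel, which is exactly what makes the comparison of the linear coefficient decisive.
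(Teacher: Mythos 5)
Your proposal is correct, and while the reduction to the Artinian case is exactly the paper's (Remarks \ref{artinian1} and \ref{golodpreserve}, Lemma \ref{reduce}), your proof of the key implication (b)$\implies$(c) takes a genuinely different route. The paper argues structurally: writing $\fm^{n+1}=0$, $\fm^n\neq 0$ for the Artinian reduction, it invokes Theorem \ref{artinian} (which rests on Martsinkovsky's theorem) together with Theorem \ref{mu} to get $\nu^{n-1}_{>0}=0$, and then, assuming $n\ge 2$, derives a contradiction from Levin's result that $R\to R/\fm^{n}$ is a Golod homomorphism, Avramov's theorem that Golod homomorphisms are small, and the fact that the only small ideal of an Artinian Golod ring is $(0)$; this forces $\fm^2=0$, so the reduction satisfies (a) outright. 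You instead run the multiplicity count of Lemma \ref{e} against the rational form $(1+t)^{c}/D(t)$ of the Golod Poincar\'e series, which is precisely the strategy the paper uses for complete intersections in Theorem \ref{ci}, transplanted to the Golod case: since the numerator has $1+t$ as its only irreducible factor, the nonvanishing of $A(-1)$ forced by Lemma \ref{e} makes the whole factor $(1+t)^{c}$ cancel, and comparing linear coefficients in $D(t)=(1+t)^{c}(1+bt)$ gives $b=-c$ and $\ell=c+1$. Your route is more elementary and self-contained (given Lemma \ref{e}, it needs only the shape of the Golod series: $D(0)=1$, no linear term, $\deg D\le c+1$), whereas the paper's route is heavier machinery but buys more: Theorem \ref{artinian} has independent interest (it answers Question 2 when $\fm^{n+2}=0$, as advertised in the introduction), and the structural argument shows directly that the Artinian reduction has $\fm^2=0$.

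Two small repairs to your write-up. First, in (c)$\implies$(a), the inference ``defining ideal generated in degree $2$, hence Koszul'' is not valid in general, since quadratic algebras need not be Koszul; what is true here is that $(\fm'')^2=0$ makes $\agr{R''}$ the quotient of a polynomial ring by the \emph{square} of its irrelevant ideal, which is Koszul (e.g., by Fr\"oberg's theorem on quadratic monomial ideals) --- this is what the paper means by ``obviously Koszul''. Second, your claim $\deg D=c+1$ requires $\HH_c(K^{R''})\neq 0$; this does hold, since for an Artinian ring the top Koszul homology is $\Ann_{R''}(\fm'')\neq 0$, but in fact your argument only needs the inequality $\deg D\le c+1$, so the exact degree is beside the point.
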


\begin{remark}
\label{golodpreserve}
Assume that $R$ is as in the hypothesis of the Corollary, and let $R'$ and $R''$ as in Remark \ref{artinian1}. Since $\varphi\colon (R,\fm,k)\to (R', \fm',k')$ is faithfully flat, with $\varphi(\fm)=\fm'$, note that $R'$ is Golod and $\agr{R'}$ Cohen-Macaulay.  Also, since the strictly regular sequence ${\bd g}$ is contained in $\fm\smallsetminus \fm^2$, note that $R''$ is Golod, as well, see \cite[5.2.4]{Avr98}
\end{remark}

\begin{proof}
Using Remarks \ref{artinian1} and \ref{golodpreserve},  we may assume that $R$ is Artinian. 

(c)$\implies$(a): Since $R$ is Artinian of minimal multiplicity, we have $\fm^2=0$. One has $R\cong \agr R$ and $\agr R$ is obviously Koszul. 

(b)$\implies$(c): Let $n$ be such that $\fm^{n+1}=0$ and $\fm^n\ne 0$. By Theorem \ref{artinian} and Theorem \ref{mu}, we have that $\nu^{n-1}_{>0}=0$. If $n\ge 2$, then the natural map $R\to R/\fm^{n}$ is Golod (cf.\,Levin \cite[3.15]{Lev}) and thus small (cf.\,Avramov \cite[3.5]{Av}). Since the only small ideal of a Golod Artinian ring is $(0)$, see \cite[4.7]{Av}, we must have $\fm^n=0$, a contradiction.We conclude $n\le 1$, hence $\fm^2=0$.

Since (a) obviously implies (b), the proof is completed. 
\end{proof}

\end{document}